\DeclareMathOperator{\oinv}{oinv}
\DeclareMathOperator{\onsp}{onsp}
\DeclareMathOperator{\nneg}{neg}
\DeclareMathOperator{\oneg}{oneg}
\DeclareMathOperator{\inv}{inv}
\DeclareMathOperator{\nsp}{nsp}
\DeclareMathOperator{\ensp}{ensp}
\DeclareMathOperator{\eneg}{eneg}
\DeclareMathOperator{\einv}{einv}
\DeclareMathOperator{\hgt}{ht}
\DeclareMathOperator{\Des}{Des}
\theoremstyle{plain}
\newtheorem{thm}{Theorem}[section]
\newtheorem{pro}[thm]{Proposition}
\newtheorem{lem}[thm]{Lemma}
\newtheorem{cor}[thm]{Corollary}
\theoremstyle{definition}
\newtheorem{defn}[thm]{Definition}
\theoremstyle{remark}
\newtheorem{rmk}[thm]{Remark}
\newcommand{\p}{\noindent}
\newcommand{\N}{\mathbb N}
\newcommand{\PP}{{\mathbb P}}
\begin{document}
	\title{Odd length in Weyl groups}\date{}\maketitle \begin{center}
	% \footnote{2010 Mathematics Subject
	%			Classification: Primary 05A15; Secondary 05E15, 20F55.}}
	%	\vspace{0.8cm} 
		
		Francesco Brenti \\
		
		Dipartimento di Matematica \\
		
		Universit\`{a} di Roma ``Tor Vergata''\\
		
		Via della Ricerca Scientifica, 1 \\
		
		00133 Roma, Italy \\
		
		{\em brenti@mat.uniroma2.it } \\
		
		\vspace{0.5cm}
		
		Angela Carnevale \footnote{Partially supported by German-Israeli Foundation for Scientific Research and Development, grant no. 1246.} \\
		
		Fakult\"at f\"ur Mathematik \\
		
		Universit\"at Bielefeld\\
		
		D-33501 Bielefeld, Germany \\
		
		{\em acarneva1@math.uni-bielefeld.de } \\
	\end{center}
	
	\vspace{0.5cm}

\begin{abstract}
We define a new statistic on any Weyl group which we call the odd length and which reduces, for
Weyl groups of types $A$, $B$, and $D$, the the statistics by the same name that have already been 
defined and studied in \cite{KV}, \cite{VS}, \cite{VS2}, and \cite{BC2}. We show that the signed (by length) generating function of the odd length
always factors nicely except possibly in type $E_8$, and we obtain multivariate analogues of these
factorizations in types $B$ and $D$.	
\end{abstract}

\thispagestyle{empty}
\section{Introduction}

A new statistic on the symmetric group was introduced in \cite{KV} in relation to formed spaces.
This statistic combines combinatorial and parity conditions and is now known as the odd
length. Similar statistics were introduced and studied in \cite{VS} and \cite{VS2} in type $B$ in relation to local
factors of representation zeta functions of certain groups and in type $D$ in \cite{BC2} and \cite{CTh}.
The signed, by length, distribution of the odd length over quotients of the hyperoctahedral groups is also related to the enumeration of matrices satisfying certain properties and with 
fixed rank. In particular, the signed distribution of the odd length on the maximal quotients of $B_n$ is strictly related to the number of symmetric $n\times n$ matrices of given rank over finite fields (see \cite{VS} for a precise conjecture,  \cite{BC1} for a proof, and \cite{CSV} for related work). In \cite{KV} and \cite{VS2} closed product formulas were conjectured for the signed generating
function of the odd length over all quotients of the symmetric and hyperoctahedral groups, 
respectively. These conjectures were proved in  \cite{BC1} (see also \cite{CTh}) in types $A$ and $B$ and independently in \cite{Lan} for type $B$. 

In this paper we define a new statistic on any Weyl group. This statistic depends on the root system 
underlying the Weyl group and we compute it combinatorially for the classical root systems of types $A$, $B$, $C$, and $D$.
As a consequence we verify that this statistic coincides, in types $A$, $B$, and $D$, with the odd length statistics defined and studied in \cite{KV}, \cite{VS}, \cite{VS2}, \cite{BC2}, \cite{CTh}, and \cite{Lan} in 
these types. Our combinatorial computation of the statistic in the classical types shows that it is the sum of some more
fundamental statistics and we compute the signed (by length) multivariate generating function of these statistics in types $B$ and $D$. These results reduce to results in \cite{KV}, \cite{VS2}, and \cite{BC1} when all
the variables are equal.
We also show that the signed generating function of this statistic factors nicely for any crystallographic
root system except possibly in type $E_8$.

The organization of the paper is as follows. In the next section we recall some definitions, notation and
results that are used in the sequel. In \S 3 we define a new statistic on any Weyl group, which we call the odd
length. This statistic depends on the choice of a simple system in the root system of the Weyl group and we
show that its generating function over the Weyl group only depends on the root system. Using a convenient
choice of simple system we compute combinatorially the odd length of any element of any Weyl group of
classical type and verify that it coincides, in types $A$, $B$, and $D$, with the odd length statistics already
defined in \cite{KV}, \cite{VS}, \cite{VS2}, \cite{BC2}, and \cite{CTh}, in these types. 
In \S 4 we show that the signed generating function of the odd length over the symmetric group coincides with the one over the unimodal permutations. In \S 5 we compute, motivated by the results in \S 3 and using the
one proved in \S 4, the signed
multivariate distributions of certain statistics 
%that enter the computation of the odd length in classical types 
over the Weyl groups of types $B$ and $D$ and show that they factor nicely in almost all cases. Finally, in \S 6,
we show, using previous results and computer calculations, that the signed generating function of the odd length factors nicely for all irreducible crystallographic root systems except possibly in type $E_8$.

\section{Preliminaries}
In the following $V$ is a real vector space endowed with a symmetric bilinear form $(\cdot,\cdot)$. A reflection is a linear operator $s$ on $V$ which sends some nonzero vector $\alpha$ to its negative and fixes pointwise the hyperplane $H_\alpha$ orthogonal to it. For $v\in V$ the action of $s=s_\alpha$ is given by:
\[s_\alpha v=v-2\frac{(\alpha,v)}{(\alpha,\alpha)}v.\]
It is easy to see that $s_\alpha$ is an involution in $O(V)$, group of orthogonal transformations of $V$. Finite reflection groups are finite subgroups of $O(V)$ generated by reflections. We are interested in Coxeter groups of type $A$, $B$ and $D$, which arise as reflection groups of crystallographic root systems.

\begin{defn}
Let $V$, $(\cdot,\cdot)$ be as before. A finite subset $\Phi \subset V$ of nonzero vectors is a \emph{crystallographic root system} if it satisfies:
\begin{enumerate}
\item $\Phi \cap \mathbb R \Phi=\{\alpha,-\alpha\}$ for all $\alpha \in \Phi$
\item $s_{\alpha}\Phi=\Phi$ for all $\alpha \in \Phi$
\item $\frac{(\alpha,\beta)}{(\alpha,\alpha)} \in \mathbb Z$ for all $\alpha,\beta\in \Phi$.
\end{enumerate}
Vectors in $\Phi$ are called \emph{roots}.
\end{defn}
The group $W$ generated by the reflections $\{s_\alpha, \,\alpha \in \Phi\}$, is the \emph{Weyl group} of $\Phi$.

We call a subset $\Delta \subset \Phi$ a \emph{simple system} if it is a basis of the $\mathbb R-$span of  $\Phi$ in $V$ and if moreover each $\alpha \in \Phi$ is a linear combination of elements of $\Delta$ with all nonnegative or all nonpositive  coefficients. It turns out that simple systems exist (for details see \cite{Hum}) and that for crystallographic root systems all the roots are integer linear combinations of simple roots. The group $W$ is indeed generated by $S=\{s_\alpha,\,\alpha\in \Delta\}$, the set of simple reflections. Moreover $(W,S)$ is a Coxeter system. For an element $w=s_{\alpha_1}\cdots s_{\alpha_r}\in W$ and a root $\alpha$ we let $w(\alpha)$ denote the action of $w$ on $\alpha$ as composition of the reflections $s_{\alpha_1},\ldots,s_{\alpha_r}$.

For $\Delta=\{\alpha_s,\,s\in S\}$,  we let $\Phi_\Delta ^+$ denote the set of roots that are nonnegative linear combinations of simple roots, and $\Phi_\Delta ^-=-\Phi_\Delta^+$, so $\Phi_\Delta=\Phi_\Delta^+\cup \Phi_\Delta^-$.

\p For $\alpha \in \Phi$, $\alpha=\sum_{s\in S} c_s \alpha_s$, we call \emph{height} of $\alpha$ (with respect to $\Delta$) the sum of the coefficients of the linear combination: 
\begin{equation}\label{hg}
\hgt_\Delta(\alpha):=\sum_{s\in S}{c_s}.
\end{equation}

For a Coxeter system $(W,S)$ as above, the Coxeter length has an interpretation in terms of the action of $W$ on $\Phi$:\begin{equation}\label{lengthroot}
\ell(w)=|\{\alpha\in\Phi^+\,:\,w(\alpha)\in \Phi^-\}|,
\end{equation}
that is, for any element $w\in W$ it counts the number of positive roots sent to negative roots by its action as a composition of reflections.

Let now $\Phi$ be a crystallographic irreducible root system of type $A$, $B$, $C$, or $D$. We consider, in particular, for each of these types the following root systems:
\begin{enumerate}
\item $\Phi=\{\pm(e_i-e_j),\,1\leq i<j\leq n\}$ (type $A_{n-1}$),
\item $\Phi=\{\pm(e_i\pm e_j),\,1\leq i<j\leq n\}\cup\{e_i, i\in [n]\}$ (type $B_{n}$),
\item $\Phi=\{\pm(e_i\pm e_j),\,1\leq i<j\leq n\}\cup\{2e_i, i\in [n]\}$  (type $C_{n}$),
\item $\Phi=\{\pm(e_i\pm e_j),\,1\leq i<j\leq n\}$  (type $D_{n}$).
\end{enumerate}
For these systems, we will consider in the sequel the following convenient choices of simple systems:
\begin{enumerate}
\item $\Delta=\{(e_{i+1}-e_{i}),\, i\in [n-1]\}$, for type $A_{n-1}$,
\item $\Delta=\{(e_{i+1}- e_{i}),\, i\in [n-1]\}\cup\{e_1\}$, for type $B_{n}$,
\item $\Delta=\{(e_{i+1}- e_i),\, i\in [n-1]\}\cup\{2e_1\}$,  for type $C_{n}$,
\item $\Delta=\{(e_{i+1}- e_i),\, i\in [n-1]\} \cup \{ e_1 + e_2 \}$,  for type $D_{n}$.
\end{enumerate}
We recall here that for suitable sets of generators, the groups $W(\Phi)$ are not only Coxeter groups, but they have very nice combinatorial descriptions.  We employ here, for these groups, notation from in \cite[Chapter 8]{BB}. In particular, for the groups of (even) signed permutations we use the window notation.
\begin{pro}\label{prelim}
Let $\Delta\subseteq\Phi$ and  $S$ be as above. Then $(W(\Phi),S)$ is isomorphic to:
\begin{enumerate}
\item the symmetric group $S_n$, with Coxeter generators the simple transpositions $(i,i+1)$, for $i=1\ldots n-1$, if $\Phi$ is of type $A_{n-1}$;
\item the group of signed permutations $B_n$, with Coxeter generators the simple transpositions $(i,i+1)(-i,-i-1)$, for $i=1\ldots n-1$, and $s_0^B=[-1,2,\ldots,n]$, if $\Phi$ is of type $B_n$ or $C_n$;
\item the even hyperoctahedral group $D_n$, with Coxeter generators the simple transpositions $(i,i+1)(-i,-i-1)$, for $i=1\ldots n-1$, and $s_0^D=[-2,-1,3,\ldots,n]$, if $\Phi$ is of type $D_n$.
\end{enumerate}  
Moreover, with the above choices of simple systems, the Coxeter length has the following combinatorial interpretations in terms of statistics on the window notation
\[\ell_\Delta(\sigma)=\begin{cases} \inv(\sigma),& \mbox{ if $\Phi$ is of type $A$,}\\
 \inv(\sigma)+\nneg(\sigma)+\nsp(\sigma), &\mbox{ if $\Phi$ is of type $B$ or $C$,}\\
  \inv(\sigma)+\nsp(\sigma),& \mbox{ if $\Phi$ is of type $D$,}\end{cases}\]
  where $\inv(\sigma)=|\{(i,j)\in[n]^2\mid i<j,\,\sigma(i)>\sigma(j)\}|$, $\nsp(\sigma)=|\{(i,j)\in[n]^2\mid i<j,\,\sigma(i)+\sigma(j)<0\}|$, $\nneg(\sigma)=|\{i \in[n]\mid \sigma(i)<0\}|$.
\end{pro}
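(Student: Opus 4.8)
The plan is to establish the two assertions in turn: first the group isomorphisms together with the identification of the Coxeter generators, and then the combinatorial formulas for the Coxeter length. For the isomorphisms I would compute directly the action on the standard basis $e_1,\dots,e_n$ of each simple reflection, using $s_\alpha(v)=v-2\frac{(\alpha,v)}{(\alpha,\alpha)}\alpha$. A one-line computation gives that $s_{e_{i+1}-e_i}$ interchanges $e_i$ and $e_{i+1}$ (hence acts as the transposition $(i,i+1)$ on coordinates), that $s_{e_1}$ (equivalently $s_{2e_1}$, same hyperplane) sends $e_1\mapsto -e_1$ and fixes the other $e_k$ (hence equals $s_0^B$), and that $s_{e_1+e_2}$ sends $e_1\mapsto -e_2$, $e_2\mapsto -e_1$ (hence equals $s_0^D$). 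Since the geometric representation of a finite Coxeter group is faithful, $W(\Phi)$ is isomorphic to the subgroup of (signed) permutation matrices generated by these images; that this subgroup is $S_n$, $B_n$ (note $C_n$ gives the same reflections, hence the same group), or $D_n$ respectively is standard (cf.\ \cite[Ch.~8]{BB}, \cite{Hum}): for $B_n$ one conjugates $s_0^B$ by the transpositions to produce every single sign change, and for $D_n$ one checks that the generated group is exactly the index-two subgroup of signed permutations with an even number of sign changes.

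For the length formulas the key tool is the root-theoretic expression $\ell(w)=|\{\alpha\in\Phi^+:w(\alpha)\in\Phi^-\}|$ of \eqref{lengthroot}. First I would record, for each of the four simple systems, the explicit set of positive roots. With the chosen increasing simple roots $e_{i+1}-e_i$ these are $\{e_j-e_i:i<j\}$ in type $A$; the same set together with $\{e_i+e_j:i<j\}$ and $\{e_i:i\in[n]\}$ in type $B$; the analogous set with $2e_i$ in place of $e_i$ in type $C$; and $\{e_j-e_i:i<j\}\cup\{e_i+e_j:i<j\}$ in type $D$. Writing the element as the (signed) permutation $\sigma$ determined by $w(e_i)=e_{\sigma(i)}$ under the convention $e_{-k}:=-e_k$, the task reduces to deciding, positive root by positive root, whether its image lies in $\Phi^-$.

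I would then carry out a short case analysis on the sign pattern of the pair $(\sigma(i),\sigma(j))$. For a difference root $e_j-e_i$ with $i<j$ one has $w(e_j-e_i)=e_{\sigma(j)}-e_{\sigma(i)}$, and checking each of the four sign combinations shows this is a negative root precisely when $\sigma(i)>\sigma(j)$; here one uses crucially that $|\sigma(i)|\ne|\sigma(j)|$ for a signed permutation, which rules out degenerate images such as $\pm 2e_k$. Hence the difference roots contribute exactly $\inv(\sigma)$. For a sum root $e_i+e_j$ with $i<j$ the same kind of analysis of $w(e_i+e_j)=e_{\sigma(i)}+e_{\sigma(j)}$ gives that its image is negative precisely when $\sigma(i)+\sigma(j)<0$, contributing $\nsp(\sigma)$; and for a coordinate root $e_i$ (or $2e_i$) the image $e_{\sigma(i)}$ is negative iff $\sigma(i)<0$, contributing $\nneg(\sigma)$. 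Summing the relevant contributions yields $\inv$ in type $A$, $\inv+\nneg+\nsp$ in types $B$ and $C$, and $\inv+\nsp$ in type $D$.

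The main obstacle, and the only place requiring genuine care, is this sign analysis of the difference and sum roots: one must verify the stated criteria uniformly across all four sign combinations of $(\sigma(i),\sigma(j))$, confirm that the nonstandard increasing choice of simple roots does not alter the outcome, and invoke $|\sigma(i)|\ne|\sigma(j)|$ to exclude the degenerate cases. By contrast, the identification of the generators and the group isomorphisms is routine and can be quoted from the standard references, and the height statistic \eqref{hg} plays no role here.
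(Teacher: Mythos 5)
Your proposal is correct, but note that the paper contains no proof of Proposition \ref{prelim} to compare against: it is stated as a recollection of standard facts, with an implicit citation to \cite[Chapter 8]{BB} (and \cite{Hum}), so you have supplied an argument where the authors supply none. Your argument is sound where it matters. The computation of the simple reflections identifies them with the stated generators in window notation ($s_{e_{i+1}-e_i}\leftrightarrow (i,i+1)(-i,-i-1)$, $s_{e_1}=s_{2e_1}\leftrightarrow s_0^B$, $s_{e_1+e_2}\leftrightarrow s_0^D$), the map $w\mapsto\sigma$ defined by $w(e_i)=e_{\sigma(i)}$ with $e_{-k}:=-e_k$ is a homomorphism, and the root-by-root analysis of \eqref{lengthroot} is exactly right: with the increasing simple roots the positive difference roots $e_j-e_i$ ($i<j$) are sent to negative roots precisely when $\sigma(i)>\sigma(j)$ (in all four sign cases, using $|\sigma(i)|\neq|\sigma(j)|$), the sum roots $e_i+e_j$ precisely when $\sigma(i)+\sigma(j)<0$, and the roots $e_k$ (or $2e_k$) precisely when $\sigma(k)<0$, yielding $\inv$, $\inv+\nneg+\nsp$, and $\inv+\nsp$ in types $A$, $B$/$C$, and $D$ respectively. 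One cosmetic point: the appeal to faithfulness of the geometric representation is superfluous, since in this paper $W(\Phi)$ is \emph{defined} as the subgroup of $O(V)$ generated by the reflections $s_\alpha$, $\alpha\in\Phi$, so it is already a concrete group of signed permutation matrices and the only content is that it is generated by the simple reflections (quoted in the preliminaries) and that the matrix group generated by the listed elements is all of $S_n$, $B_n$, or $D_n$, which you correctly reduce to standard facts (for $D_n$, the containment in the even-sign-change subgroup uses that $\sigma\mapsto\nneg(\sigma) \bmod 2$ is a homomorphism).
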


\section{Odd length}

In this section we define a new statistic on any Weyl group $W$ which we call the odd length.
While this statistics depends on the choice
of a simple system $\Delta \subseteq \Phi$, where $\Phi$ is the root system of $W$, 
we show that its generating function over the corresponding Weyl group does not. 
We then compute combinatorially this new statistics for the classical Weyl groups, for a natural
choice of simple system, and show that it coincides with the statistics by the same name that 
have already been defined and studied in \cite{BC1},   \cite{BC2}, \cite{KV}, 
\cite{VS}, and \cite{VS2}.

Let $\Phi$ be a root system and $W$ be the corresponding Weyl group. Let 
$\Delta \subseteq \Phi$ be a simple system for $\Phi$, and let 
$\Phi^{+}_{\Delta}$ and $\Phi^{-}_{\Delta}$ be the corresponding sets of positive and negative
roots. Given a positive root $\alpha \in \Phi^{+}_{\Delta}$ let $\hgt_{\Delta}(\alpha)$ be its height,
relative to $\Delta$.
For any $\sigma \in W$, we let
\begin{equation}
L_{\Phi(\Delta)}(\sigma):=|\{\alpha\in \Phi^{+}_{\Delta}\,: \, \hgt_{\Delta}(\alpha) \equiv 1 \pmod 2 ,\,
\sigma(\alpha)\in \Phi^{-}_{\Delta} \}|.
\end{equation}
We call  $L_{\Phi(\Delta)}(\sigma)$ the {\em odd length} of $\sigma$, and we call \emph{odd roots} the positive roots of odd height.

Our object of interest in this work is the signed (by length) generating function of the odd length over
the Weyl group. We now show that this generating function does not depend on the simple system
used to compute $L_{\Phi(\Delta)}$ and $\ell_{\Delta}$.

\begin{pro}
Let $\Phi$ be a root system and $\Delta, \Delta' \subseteq \Phi$ be simple systems for $\Phi$.
Then 
\[
\sum_{\sigma \in W} x^{\ell_{\Delta}(\sigma)} y^{L_{\Delta}(\sigma)} =
\sum_{\sigma \in W} x^{\ell_{\Delta'}(\sigma)} y^{L_{\Delta'}(\sigma)}.
\]
In particular, $\sum_{\sigma \in W} (-1)^{\ell_{\Delta}(\sigma)} y^{L_{\Delta}(\sigma)} =
\sum_{\sigma \in W} (-1)^{\ell_{\Delta'}(\sigma)} y^{L_{\Delta'}(\sigma)}$.
\end{pro}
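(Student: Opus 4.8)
The plan is to exhibit an explicit bijection of $W$ that carries the pair of statistics $(\ell_\Delta, L_\Delta)$ to $(\ell_{\Delta'}, L_{\Delta'})$. The starting point is the classical fact (see \cite{Hum}) that $W$ acts transitively on the set of simple systems of $\Phi$: there exists $w \in W$ with $w(\Delta) = \Delta'$. I would first record the two consequences of this that drive the argument. Since $w$ permutes $\Phi$ and sends the simple roots of $\Delta$ bijectively onto those of $\Delta'$, it carries nonnegative linear combinations of $\Delta$ to nonnegative linear combinations of $\Delta'$; hence $w(\Phi^{+}_{\Delta}) = \Phi^{+}_{\Delta'}$ and likewise $w(\Phi^{-}_{\Delta}) = \Phi^{-}_{\Delta'}$. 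Moreover, writing $\alpha = \sum_{\alpha_s \in \Delta} c_s \alpha_s$ we get $w(\alpha) = \sum_s c_s\, w(\alpha_s)$, an expansion of $w(\alpha)$ in the simple roots $w(\alpha_s) \in \Delta'$ with the \emph{same} coefficients, so $\hgt_{\Delta'}(w(\alpha)) = \hgt_{\Delta}(\alpha)$ for every $\alpha \in \Phi$.

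Next I would show that conjugation $\sigma \mapsto w \sigma w^{-1}$ intertwines the two statistics. For the Coxeter length, using the root-theoretic description \eqref{lengthroot} and substituting $\beta = w(\alpha)$ with $\alpha \in \Phi^{+}_{\Delta}$, one has $w\sigma w^{-1}(\beta) = w(\sigma(\alpha))$, which lies in $\Phi^{-}_{\Delta'} = w(\Phi^{-}_{\Delta})$ if and only if $\sigma(\alpha) \in \Phi^{-}_{\Delta}$; since $\beta \mapsto \alpha$ is a bijection $\Phi^{+}_{\Delta'} \to \Phi^{+}_{\Delta}$, this gives $\ell_{\Delta'}(w\sigma w^{-1}) = \ell_{\Delta}(\sigma)$. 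The same substitution handles the odd length: the extra constraint $\hgt_{\Delta'}(\beta) \equiv 1 \pmod 2$ becomes $\hgt_{\Delta}(\alpha) \equiv 1 \pmod 2$ by the height-preservation step above, so $L_{\Delta'}(w\sigma w^{-1}) = L_{\Delta}(\sigma)$ as well.

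Finally, because $\sigma \mapsto w\sigma w^{-1}$ is a bijection of $W$ onto itself, reindexing by $\tau = w\sigma w^{-1}$ yields
\[
\sum_{\sigma \in W} x^{\ell_{\Delta}(\sigma)} y^{L_{\Delta}(\sigma)} = \sum_{\sigma \in W} x^{\ell_{\Delta'}(w\sigma w^{-1})} y^{L_{\Delta'}(w\sigma w^{-1})} = \sum_{\tau \in W} x^{\ell_{\Delta'}(\tau)} y^{L_{\Delta'}(\tau)},
\]
which is the claim; specializing $x = -1$ gives the ``in particular'' statement. I do not expect a genuine obstacle here, as the argument is a clean transport of structure. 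The one point demanding care is the height-preservation step: it relies essentially on $w$ mapping $\Delta$ onto $\Delta'$ as a set of simple roots, and not merely permuting $\Phi$, which is precisely what transitivity on simple systems supplies and which guarantees that the coefficient vector of each root is literally unchanged.
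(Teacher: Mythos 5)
Your proof is correct and is essentially the paper's own argument: both use transitivity of $W$ on simple systems to find $w$ with $w(\Delta)=\Delta'$, observe that $w$ preserves positivity and height of roots, deduce that conjugation by $w$ carries $(\ell_\Delta, L_\Delta)$ to $(\ell_{\Delta'}, L_{\Delta'})$, and reindex the sum. The only difference is notational (you conjugate by $w$ where the paper conjugates by $w^{-1}$), and your explicit justification of the height-preservation step is a welcome elaboration of what the paper asserts "follows easily."
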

\begin{proof}
It is well known (see, e.g., \cite[\S 1.4]{Hum}) that in our hypotheses there is a $w \in W$ 
such that $w(\Delta) = \Delta'$, and so $w(\Phi^{+}_{\Delta})= \Phi^{+}_{\Delta'}$. From this it
follows easily that for any $\alpha \in \Phi^{+}_{\Delta}$ we have that $\hgt_{\Delta}(\alpha) =
\hgt_{\Delta'}(w(\alpha))$. Therefore 
\begin{eqnarray*}
L_{\Delta'}(\tau)&=& 
|\{\alpha\in \Phi^{+}_{\Delta'}\,: \, \hgt_{\Delta'}(\alpha) \equiv 1 \pmod 2 ,\,
\tau(\alpha)\in \Phi^{-}_{\Delta'} \}| \\
&=&
|\{\beta \in \Phi^{+}_{\Delta}\,: \, \hgt_{\Delta'}(w(\beta)) \equiv 1 \pmod 2 ,\,
\tau(w(\beta))\in \Phi^{-}_{\Delta'} \}| \\
&=& |\{\beta\in \Phi^{+}_{\Delta}\,: \, \hgt_{\Delta}(\beta) \equiv 1 \pmod 2 ,\,
\tau(w(\beta))\in w(\Phi^{-}_{\Delta}) \}| \\
&=&L_{\Delta}(w^{-1} \tau w)
\end{eqnarray*}
and similarly $\ell_{\Delta'}(\tau) = \ell_{\Delta}(w^{-1} \tau w)$, for all $\tau \in W$. 
Hence
\[
\sum_{\tau \in W} x^{\ell_{\Delta'}(\tau)} y^{L_{\Delta'}(\tau)}=
\sum_{\tau \in W} x^{\ell_{\Delta}(w^{-1} \tau w)} y^{L_{\Delta}(w^{-1} \tau w)} =
\sum_{\sigma \in W} x^{\ell_{\Delta}(\sigma)} y^{L_{\Delta}(\sigma)}.
\]
\end{proof}

We now derive combinatorial descriptions of $L_{\Phi(\Delta)}$ for the root systems of types 
$A$,  $B$, $C$, and $D$, for the same choice of simple systems used in Proposition \ref{prelim}.
For $\sigma \in B_n$ we let, following \cite{BC1} (see also \cite{CTh})
\begin{eqnarray*}
\oneg(\sigma)&:=&|\{i\in [n] \,|\,\sigma(i)<0, \; i \equiv 1 \pmod 2 \}|\\
\onsp(\sigma)&:=&|\{(i,j)\in [n]\times[n] \,|\, i<j, \,\sigma(i)+\sigma(j)<0, \; j-i \equiv 1 \pmod 2 \}|, 
\end{eqnarray*}
and
\[
\oinv(\sigma) := |\{(i,j)\in [n]\times[n] \,|\, i<j, \,\sigma(i) > \sigma(j), \; j-i \equiv 1 \pmod 2\}|,
\]
and define similarly $\eneg$, $\ensp$, and $\einv$.

Let $\Phi$ be a cristallographic root system of type $A$, $B$, $C$, or $D$, and
$\Delta \subseteq \Phi$ be the simple system considered in Proposition \ref{prelim}.

%For $B_n$ we take as simple roots $\Delta=\{e_i-e_{i+1},\,i\in [n-1]\}\cup\{e_n\}$, so that $\Phi^+=\{ e_i - e_j,\,e_i+e_j,\,1\leq i<j\leq n\}\cup\{e_i,\,i\in [n]\}$. 
%
%In this case we have that 
%\[\hgt (e_i-e_j)=j-i, \quad \hgt(e_i)=n-i+1 \quad\mbox{ and }\quad \hgt(e_i+e_j)=2n+2-i-j, \]
%for all $1\leq i < j \leq n$. Therefore we have the following
\begin{pro} \label{oddcomb}
Let $\Phi$ be a cristallographic root system of type $A$, $B$, $C$, or $D$, and 
$\Delta \subseteq \Phi$ be as above. Then
\[
L_{\Phi(\Delta)}(\sigma)=\begin{cases} 
\oinv(\sigma), \quad\mbox{if $\Phi$ is of type $A$} \\
\oneg(\sigma)+\oinv(\sigma)+\onsp(\sigma), \quad\mbox{if $\Phi$ is of type $B$}, \\
\nneg(\sigma)+\oinv(\sigma)+\ensp(\sigma), \quad\mbox{if $\Phi$ is of type $C$}, \\
\oinv(\sigma)+\onsp(\sigma), \quad\mbox{if $\Phi$ is of type $D$}, \\
\end{cases}
\]
for all $\sigma$ in the Weyl group of $\Phi$.							
\end{pro}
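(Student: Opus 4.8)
The plan is to compute $L_{\Phi(\Delta)}(\sigma)$ directly from its definition in each of the four types, by (i) listing the positive roots together with their heights relative to the chosen simple system $\Delta$, (ii) reading off which of these are \emph{odd roots}, i.e.\ have odd height, and (iii) translating the condition $\sigma(\alpha)\in\Phi^-_\Delta$ into a combinatorial condition on the window notation of $\sigma$. For step (iii) I would fix once and for all the linear functional $f$ on $V$ given by $f(e_i)=i$. A one-line check shows $f$ is strictly positive on every simple root of each of the four systems (for instance $f(e_1+e_2)=3>0$ in type $D$ and $f(2e_1)=2>0$ in type $C$), and $f$ is nonzero on every root; hence $\Phi^+_\Delta=\{\beta\in\Phi:f(\beta)>0\}$. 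Since $\sigma$ acts by $\sigma(e_i)=\sgn(\sigma(i))\,e_{|\sigma(i)|}$, one has $f(\sigma(e_i))=\sigma(i)$ as a signed integer, so for any root the sign of $f(\sigma(\beta))$ — and hence whether $\sigma(\beta)$ is positive or negative — is read off directly from the values $\sigma(i)$, and this sign never vanishes because $\sigma$ is a (signed) bijection. (That this convention is the correct one is confirmed by the type-$A$ case, where it recovers $\ell_\Delta(\sigma)=\inv(\sigma)$ as in Proposition \ref{prelim}.)

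I would then carry out steps (i)--(ii) type by type. For the ``difference'' roots $e_j-e_i$ ($i<j$), present in all four systems, the height is $j-i$ and $f(\sigma(e_j-e_i))=\sigma(j)-\sigma(i)$, so these roots, when of odd height, contribute exactly the pairs counted by $\oinv(\sigma)$; in type $A$ these are the only positive roots, giving $L_{\Phi(\Delta)}=\oinv$. The type-dependent computations concern the remaining roots. The ``diagonal'' roots are what split the two signed statistics: in type $B$ one finds $\hgt_\Delta(e_k)=k$, so the odd ones are those with $k$ odd, contributing $\oneg(\sigma)$, whereas in type $C$ one finds $\hgt_\Delta(2e_k)=2k-1$, which is always odd, so \emph{all} such roots are odd and they contribute $\nneg(\sigma)$. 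This is precisely the origin of the $\oneg$/$\nneg$ discrepancy between types $B$ and $C$.

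The delicate point, and the one I would treat most carefully, is the family of ``sum'' roots $e_i+e_j$ ($i<j$), because the parity of the height varies with the type in a way that interchanges $\onsp$ and $\ensp$. Exhibiting the explicit nonnegative expansion $e_i+e_j=\alpha_0+\sum_m \alpha_m$ in each system gives $\hgt_\Delta(e_i+e_j)=i+j$ in type $B$, $\ i+j-1$ in type $C$, and $\ i+j-2$ in type $D$. Hence the odd-height condition is ``$j-i$ odd'' in types $B$ and $D$ but ``$j-i$ even'' in type $C$, while in every case $f(\sigma(e_i+e_j))=\sigma(i)+\sigma(j)$ makes $\sigma(e_i+e_j)$ negative exactly when $\sigma(i)+\sigma(j)<0$. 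These roots therefore contribute $\onsp(\sigma)$ in types $B$ and $D$ and $\ensp(\sigma)$ in type $C$. Summing the contributions of the three root families in each type yields the four stated formulas. The main obstacle is thus not conceptual but a matter of bookkeeping: getting these three height formulas for $e_i+e_j$ exactly right, since a single sign error there would swap $\onsp$ and $\ensp$ and break the type-$C$ identity; I would therefore pin each one down via the explicit expansion in the basis $\Delta$ rather than by a parity heuristic.
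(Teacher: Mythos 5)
Your proposal is correct and follows essentially the same route as the paper: the paper's proof consists precisely of the height computations $\hgt_\Delta(e_j-e_i)=j-i$, $\hgt_\Delta(e_k)=k$ (type $B$), $\hgt_\Delta(e_i+e_j)=i+j$, $i+j-1$, $i+j-2$ in types $B$, $C$, $D$ respectively, which are exactly the formulas you derive, with the translation of $\sigma(\alpha)\in\Phi^-_\Delta$ into window-notation conditions left implicit. Your additional device of the linear functional $f(e_i)=i$ to certify positivity of roots and read off $f(\sigma(\beta))$ is a clean way of making that implicit step rigorous, but it is a refinement of the same argument rather than a different one.
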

\begin{proof}
In all types a simple computation shows that
$
\hgt_\Delta(-e_i+e_j)=j-i,
$
for all $1\leq i<j\leq n$. Furthermore, if $\Phi$ is of type $B$ then one obtains that
$
\hgt_\Delta(e_i)=i $ for all $1 \leq i \leq n$ and $\hgt_\Delta(e_i+e_j)=i+j$ for
all $1\leq i<j\leq n$
while
$\hgt_\Delta(e_i+e_j)=i+j-1$ for all $1\leq i \leq j\leq n$
if $\Phi$ is of type $C$, and
$ \hgt_\Delta(e_i+e_j)=i+j-2$
if $\Phi$ is of type $D$, for all $1\leq i<j\leq n$.
\end{proof}

So, for example, if  $n=5$, and $\sigma=[3,-1,-4,-2,5]$, then $L_{\Phi(\Delta)}(\sigma)=6$, if $\Phi$ is of type $B$,
while $L_{\Phi(\Delta)}(\sigma)=8$ if $\Phi$ is of type $C$.

Proposition \ref{oddcomb} shows that in types $A$, $B$, and $D$, with the choice of simple system
made there, $L_{\Phi(\Delta)}$ coincides with the odd length $L$ defined and studied in \cite{BC1},  \cite{BC2}, \cite{KV}, 
\cite{VS}, and \cite{VS2}.

\section{Type $A$}
We showed in the previous section that the odd length defined combinatorially for type $A$ coincides with $L_{\Phi(\Delta)}$ for a very natural choice of simple system of type $A_{n-1}$.  Nice formulae for the signed (by length) distribution of this statistic over all quotients of the symmetric groups were proved in \cite{BC1}. For later use (see Section \ref{sec:multiD}), we prove here that the signed generating function of $L_{\Phi(\Delta)}(=\oinv)$ over $S_n$ is the same as the one over the set of unimodal permutations, whose definition we now recall. 
\begin{defn}
Let $\sigma \in S_n$. We say that $i\in [2,n-1]$ is a \emph{peak} if $\sigma(i-1)<\sigma(i)>\sigma(i+1)$.
\end{defn}
\begin{defn}
We say that a permutation $\sigma\in S_n$ is \emph{unimodal} if it has no peaks. We denote by $U_n$ the set of  unimodal permutations.
\end{defn}

\begin{lem}\label{lem:unim} Let $n\in \N$. Then
\[\sum_{\sigma \in S_n}(-1)^{\ell(\sigma)}x^{\oinv(\sigma)}=\sum_{\sigma \in U_n}(-1)^{\ell(\sigma)}x^{\oinv(\sigma)}.\]
\end{lem}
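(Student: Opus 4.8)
The plan is to exhibit a sign-reversing, $\oinv$-preserving involution on the set $S_n \setminus U_n$ of non-unimodal permutations. If I can construct such an involution $\iota$, then the contributions of paired elements cancel in the signed sum $\sum_{\sigma \in S_n} (-1)^{\ell(\sigma)} x^{\oinv(\sigma)}$, leaving only the unimodal permutations, which is exactly the claimed identity. The two requirements are that $\iota$ changes the parity of the ordinary length $\ell = \inv$ (so the signs are opposite) while fixing $\oinv$ (so the $x$-powers match), and that $\iota$ is a genuine involution with no fixed points on $S_n \setminus U_n$.

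\textbf{Construction of the involution.} The natural local move that preserves $\oinv$ but flips $\inv$-parity is the transposition of two entries in positions of the \emph{same} parity that are adjacent among the odd (or even) positions, i.e. swapping the values in positions $i$ and $i+2$. Indeed, if $\sigma'$ is obtained from $\sigma$ by interchanging $\sigma(i)$ and $\sigma(i+2)$, then every pair $(j,k)$ with $k-j$ odd has its relative order unchanged, so $\oinv(\sigma') = \oinv(\sigma)$; meanwhile this alters $\inv$ by an odd amount (the pair $(i,i+2)$ contributes, together with an even adjustment coming from the middle position $i+1$), changing the parity of $\ell$. So I would associate to each non-unimodal $\sigma$ a canonical such swap. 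The cleanest choice is to locate the leftmost peak, say at position $p$ where $\sigma(p-1) < \sigma(p) > \sigma(p+1)$, and to swap the values in positions $p-1$ and $p+1$; these positions have the same parity, so $\oinv$ is preserved and $\inv$-parity flips.

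\textbf{Main obstacle.} The hard part will be verifying that this map is a well-defined \emph{involution} on $S_n \setminus U_n$ — that is, that swapping $\sigma(p-1)$ and $\sigma(p+1)$ keeps the permutation non-unimodal and, crucially, does not move the leftmost peak, so that applying the rule a second time undoes the swap. One must check that the swap cannot create a new peak to the left of $p$ (positions $1,\dots,p-1$ were weakly increasing up to $p$ by minimality of the peak, and exchanging the value at $p-1$ requires care) and that position $p$ remains a peak after the exchange. I anticipate that the correct canonical statistic to preserve under the involution is not merely ``leftmost peak position'' but a more robust invariant, so I would likely refine the rule: scan for the first index $i$ (of appropriate parity relative to a chosen descent) at which a parity-respecting adjacent-in-parity swap toggles unimodality, and prove this index is itself invariant under the swap. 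Establishing this invariance, and thereby that $\iota^2 = \mathrm{id}$ with no fixed points off $U_n$, is where the real combinatorial work lies; the sign and $\oinv$-preservation are then immediate from the parity bookkeeping sketched above.
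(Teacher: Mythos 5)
Your overall strategy is the paper's own: cancel the non-unimodal permutations by a sign-reversing involution that swaps the entries in positions $p-1$ and $p+1$ around a peak $p$, and your bookkeeping for the statistics is essentially right ($\oinv$ is preserved and the parity of $\ell$ flips; note only that individual odd pairs such as $(j,p-1)$ can change inversion status --- it is the sum of the contributions of $(j,p-1)$ and $(j,p+1)$ that is preserved). The genuine gap is the one you yourself flag and then leave unresolved: the \emph{leftmost} peak is not invariant under the swap, so your map is not an involution, and the "refined rule" you gesture at is never actually constructed. Concretely, take $\sigma = [1,4,5,6,2,3]$: its unique peak is at $p=4$, and swapping positions $3$ and $5$ gives $\sigma' = [1,4,2,6,5,3]$, which has a \emph{new} peak at position $2$, to the left of $4$. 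Your rule applied to $\sigma'$ would swap positions $1$ and $3$, producing $[2,4,1,6,5,3] \neq \sigma$, so $\iota^{2} \neq \mathrm{id}$ and the cancellation argument collapses.

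The missing idea --- and the paper's choice --- is to select not the leftmost peak but the peak of \emph{maximum value}: let $r$ be the peak position with $\sigma(r) = \max\{\sigma(i) : i \text{ a peak}\}$ and set $\sigma^{r} := \sigma(r-1,r+1)$. This choice is stable under the swap for the following reason. Peak status can only change at positions $r-2,\, r-1,\, r,\, r+1,\, r+2$; position $r$ remains a peak; positions $r\pm 1$ cannot become peaks (their new values are smaller than the adjacent $\sigma(r)$); and any \emph{newly created} peak at $r-2$ (resp.\ $r+2$) must satisfy $\sigma(r-2) < \sigma(r-1) < \sigma(r)$ (resp.\ $\sigma(r+2) < \sigma(r+1) < \sigma(r)$), since for that position to have failed to be a peak before the swap it must be smaller than its old neighbor. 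Hence $\sigma(r)$ is still the strictly largest peak value of $\sigma^{r}$, attained at $r$, so applying the rule to $\sigma^{r}$ swaps the same two positions back: the map is a fixed-point-free involution on $S_n \setminus U_n$. With this single change of canonical choice, your argument closes up and becomes the paper's proof.
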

\begin{proof}
Let $\sigma \in S_n \setminus U_n$. Let $R_{\sigma}:=\{\sigma(i)\mid i \mbox{ peak}\}$ be the set of  the values of the images of the peaks of $\sigma$. By hypothesis $R_{\sigma}$ is non-empty. Let $r$ be  such that $\sigma(r)=\max R_{\sigma}$ and define the involution $\sigma^r:=\sigma(r-1,r+1)$. Then $\ell(\sigma^r)=\ell(\sigma)\pm1$ while $\oinv(\sigma^r)=\oinv(\sigma)$. Thus
\[\sum_{\sigma \in S_n}(-1)^{\ell(\sigma)}x^{\oinv(\sigma)}=\sum_{\sigma \in U_n}(-1)^{\ell(\sigma)}x^{\oinv(\sigma)},\] as desired.\end{proof}

In fact, a finer result holds: the signed generating function is the same when restricted to chessboard elements, defined in \cite{VS}, whose definition we recall here.
\begin{defn}
	We say that a (even signed)  permutation $\sigma\in S_n$ (or  $D_n$) is \emph{chessboard} if $\sigma(i)\equiv i$ for all $i\in[n]$ or if  $\sigma(i)\equiv i+1 $ for all $i \in [n ]$. We write $C(S_n)$, resp.\  $C(D_n)$,  for the subgroup of the chessboard elements of the relative group and for $X\subset S_n$ (or $D_n$) we denote $C(X)=X\cap C(S_n)$, resp.\ $C(X)=X\cap C(D_n)$.
\end{defn}

\begin{rmk}
As the involution defined  in the proof of Lemma~\ref{lem:unim} preserves the parity of the entries in all positions, the same equality holds true when restricting the supports of the sums on both sides to chessboard elements, 
\[\sum_{\sigma \in C(S_n)}(-1)^{\ell(\sigma)}x^{\oinv(\sigma)}=\sum_{\sigma \in C(U_n)}(-1)^{\ell(\sigma)}x^{\oinv(\sigma)}.\qedhere\]  \end{rmk}
\section{Signed multivariate distributions}
Taking inspiration from the combinatorial descriptions of the odd length defined in the previous section, we define here some natural generalizations of these statistics and study their signed (multivariate) distributions over the classical Weyl groups. In all cases, we show that these generating functions factor in a very explicit way.

\subsection{Type $B$}
In this section we study the signed multivariate distributions of
the statistics $\oneg$, $\eneg$, $\oinv$, $\onsp$, and $\ensp$ over
the classical Weyl groups of type $B_n$. In almost all cases we show that these factor in a very nice way. In particular,
we obtain the signed generating function of $L_{\Phi}$ for
root systems of types ($B$ and)~$C$.

We begin with the following lemmas. Their proofs are straightforward  verifications from the definitions and are 
therefore omitted.

\begin{lem}\label{tilde}
Let $n\in \N$, $n\geq 2$, $\sigma \in B_{n-1}$. Let $\tilde\sigma:=[\sigma(1),\ldots,\sigma(n-1),-n]$  and 

\p $\delta$:=$\chi( n \equiv 0 \pmod 2 )$.
Then:
\vspace{4mm}

\begin{tabular}{ll}
$\oneg(\tilde\sigma)=\oneg(\sigma)+1-\delta$ & $\eneg(\tilde\sigma)=\eneg(\sigma)+\delta$ \\
$\oinv(\tilde\sigma)=\oinv(\sigma)+\left\lceil\frac{n-1}{2}\right\rceil$ & $\einv(\tilde\sigma)=\einv(\sigma)+\left\lfloor\frac{n-1}{2}\right\rfloor$\\
$\onsp(\tilde\sigma)=\onsp(\sigma)+\left\lceil\frac{n-1}{2}\right\rceil$ & $\ensp(\tilde\sigma)=\ensp(\sigma)+\left\lfloor\frac{n-1}{2}\right\rfloor$ .\\
\end{tabular}
\end{lem}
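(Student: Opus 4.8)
The plan is to verify each of the six identities directly from the definitions, exploiting that $\tilde\sigma$ agrees with $\sigma$ on the positions $1,\ldots,n-1$ and differs only by the new entry $\tilde\sigma(n)=-n$. Each occurrence counted by a statistic on the left-hand side either involves only positions in $[n-1]$, in which case it corresponds bijectively to an occurrence for $\sigma$ and reproduces the matching statistic $\oneg(\sigma)$, $\oinv(\sigma)$, etc., or it involves the new position $n$. So for every statistic it suffices to count the contribution of position $n$ and add it to the value for $\sigma$.

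For the negativity statistics, the only new term comes from position $n$ itself, whose value $\tilde\sigma(n)=-n$ is negative. This single term is counted by $\oneg$ exactly when $n$ is odd and by $\eneg$ exactly when $n$ is even; since $\delta=\chi(n\equiv 0 \pmod 2)$, this gives $\oneg(\tilde\sigma)=\oneg(\sigma)+1-\delta$ and $\eneg(\tilde\sigma)=\eneg(\sigma)+\delta$ at once.

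For the inversion and negative-sum-pair statistics, the new contributions come from the pairs $(i,n)$ with $i\in[n-1]$. The key observation is that $\tilde\sigma(n)=-n$ is strictly smaller than every value $\sigma(i)$, which lies in $\{\pm 1,\ldots,\pm(n-1)\}$: indeed $\sigma(i)\geq -(n-1)>-n$ and $\sigma(i)\leq n-1<n$. Hence $\tilde\sigma(i)>\tilde\sigma(n)$ and $\tilde\sigma(i)+\tilde\sigma(n)=\sigma(i)-n<0$ hold simultaneously for \emph{every} $i\in[n-1]$, so each pair $(i,n)$ is at the same time an inversion and a negative-sum pair. It is then assigned to the odd or even variant according to the parity of $n-i$, which is what makes the inversion and negative-sum-pair increments coincide.

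The only arithmetic left is the parity count: among $i\in[n-1]$ the number with $n-i$ odd is $\lceil (n-1)/2\rceil$ and the number with $n-i$ even is $\lfloor (n-1)/2\rfloor$, as one checks separately for $n$ even and $n$ odd. This delivers the four remaining identities simultaneously. I do not anticipate any genuine obstacle here; the sole subtlety is precisely this parity bookkeeping, namely the appearance of $\delta$ in the negativity statistics and the ceiling/floor split in the inversion and negative-sum-pair statistics, which is why the verification is routine and can be stated without further detail.
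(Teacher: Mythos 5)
Your proof is correct and is precisely the ``straightforward verification from the definitions'' that the paper alludes to when it omits the proofs of Lemmas \ref{tilde}--\ref{check}: you split each statistic into the contribution from pairs (or positions) inside $[n-1]$, which match those of $\sigma$, plus the contribution of the new position $n$, and the parity bookkeeping ($\delta$ for the negativity statistics, the $\lceil\cdot\rceil$/$\lfloor\cdot\rfloor$ split for the pair statistics) is carried out correctly. Nothing is missing.
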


\begin{lem}\label{hat}
Let $n\in \N,$ $n\geq 2$, $\sigma\in B_{n-1}$. Let $\hat\sigma:=[n,\sigma(1),\ldots,\sigma(n-1)]$. Then:

\vspace{4mm}
\begin{tabular}{ll}
$\oneg(\hat\sigma)=\eneg(\sigma)$ & $\eneg(\hat\sigma)=\oneg(\sigma)$ \\
$\oinv(\hat\sigma)=\oinv(\sigma)+\left\lceil\frac{n-1}{2}\right\rceil$ & $\einv(\hat\sigma)=\einv(\sigma)+\left\lfloor\frac{n-1}{2}\right\rfloor$ \\
$\onsp(\hat\sigma)=\onsp(\sigma)$ & $\ensp(\hat\sigma)=\ensp(\sigma)$. \\
\end{tabular}
\end{lem}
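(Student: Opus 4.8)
The plan is to exploit the simple structural relationship between $\sigma$ and $\hat\sigma$: every entry $\sigma(k)$ of $\sigma$ occupies position $k+1$ in $\hat\sigma$, while the new leading position $1$ is filled by the value $n$, which is strictly larger in absolute value than every entry of $\sigma$ (since $\sigma \in B_{n-1}$ forces $|\sigma(k)| \leq n-1$). I would therefore split every statistic of $\hat\sigma$ into the contribution of pairs (or positions) lying entirely among $\{2,\ldots,n\}$ and the contribution involving the new position $1$, and treat the two kinds of statistics separately: those sensitive to the parity of a single position versus those sensitive to the parity of a difference of two positions.

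For $\oneg$ and $\eneg$ only single positions matter. Since $\hat\sigma(1)=n>0$ contributes to neither, and shifting $\sigma(k)$ from position $k$ to position $k+1$ reverses the parity of its position while leaving its sign unchanged, each negative entry of $\sigma$ in an even position becomes a negative entry of $\hat\sigma$ in an odd position, and vice versa. This immediately gives $\oneg(\hat\sigma)=\eneg(\sigma)$ and $\eneg(\hat\sigma)=\oneg(\sigma)$.

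For the pair statistics $\oinv,\einv,\onsp,\ensp$ I would first observe that for indices $2\leq i<j\leq n$ the map $(i,j)\mapsto(i-1,j-1)$ is a bijection onto pairs in $[n-1]$ that preserves both the relevant condition (the inequality $\hat\sigma(i)>\hat\sigma(j)$ becomes $\sigma(i-1)>\sigma(j-1)$, and likewise for the negative-sum condition) and the parity of the difference $j-i$. Hence these pairs contribute exactly $\oinv(\sigma),\einv(\sigma),\onsp(\sigma),\ensp(\sigma)$ respectively. It then remains to count the pairs $(1,j)$ with $2\leq j\leq n$. For inversions, $\hat\sigma(1)=n$ exceeds every later entry, so all such pairs are inversions; their differences $j-1$ range over $1,\ldots,n-1$, of which $\left\lceil\frac{n-1}{2}\right\rceil$ are odd and $\left\lfloor\frac{n-1}{2}\right\rfloor$ are even, yielding the claimed increments to $\oinv$ and $\einv$. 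For negative-sum pairs, $\hat\sigma(1)+\hat\sigma(j)=n+\sigma(j-1)\geq n-(n-1)=1>0$, so no pair $(1,j)$ contributes, giving $\onsp(\hat\sigma)=\onsp(\sigma)$ and $\ensp(\hat\sigma)=\ensp(\sigma)$.

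The computation is entirely routine, and the only point requiring care is the bookkeeping of parities. One must keep in mind that prepending a single entry flips the parity of each individual position (driving the $\oneg$/$\eneg$ swap) but preserves the parity of a difference of two positions (so that the pair statistics split cleanly into an unchanged part plus the explicit contribution of the new maximal entry), and one must correctly match the odd/even difference counts arising from position $1$ to the ceiling and floor of $\frac{n-1}{2}$.
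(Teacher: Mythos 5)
Your proof is correct, and it is exactly the ``straightforward verification from the definitions'' that the paper invokes when it omits the proofs of Lemmas \ref{tilde}--\ref{check}: prepending the maximal value $n$ shifts each position's parity (swapping $\oneg$ and $\eneg$), preserves parities of position differences (so pairs within $\{2,\ldots,n\}$ contribute the old statistics), creates inversions with all $n-1$ later positions split as $\lceil\frac{n-1}{2}\rceil$ odd and $\lfloor\frac{n-1}{2}\rfloor$ even, and creates no negative-sum pairs since $n+\sigma(j-1)>0$. No gaps; the bookkeeping of parities and of the ceiling/floor counts is exactly right.
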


\begin{lem}\label{check}
Let $n\in \PP,$ $n\geq 2$, $\sigma\in B_{n-1}$, and $\check\sigma:=[-n,\sigma(1),\ldots,\sigma(n-1)]$. Then:
\vspace{4mm}

\begin{tabular}{ll}
$\oneg(\check\sigma)=\eneg(\sigma)+1$ & $\eneg(\check\sigma)=\oneg(\sigma)$ \\
$\oinv(\check\sigma)=\oinv(\sigma)$ & $\einv(\check\sigma)=\einv(\sigma)$ \\
$\onsp(\check\sigma)=\onsp(\sigma)+\left\lceil\frac{n-1}{2}\right\rceil$ & $\ensp(\check\sigma)=\ensp(\sigma)+\left\lfloor\frac{n-1}{2}\right\rfloor$ .\\
\end{tabular}
\end{lem}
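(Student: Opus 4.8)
The plan is to exploit the single structural feature of $\check\sigma$: its window is obtained from that of $\sigma$ by inserting the value $-n$ in the first position and shifting every other entry one place to the right, so that $\check\sigma(1)=-n$ and $\check\sigma(k)=\sigma(k-1)$ for $2\le k\le n$. Since $\sigma\in B_{n-1}$ we have $|\sigma(i)|\le n-1<n$ for every $i$, and this elementary bound drives all six computations. For each statistic I would split its defining set according to whether the first position is involved, and handle the two parts separately.

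For the pairs (or positions) \emph{not} involving the first coordinate, reindexing via $i=k-1$, $j=l-1$ identifies the relevant set with the corresponding set for $\sigma$. The point to watch is that this shift reverses the parity of an individual position ($\text{odd}\leftrightarrow\text{even}$): this is precisely why the later positions contribute $\eneg(\sigma)$ to $\oneg(\check\sigma)$ and $\oneg(\sigma)$ to $\eneg(\check\sigma)$. By contrast, the shift leaves the difference $j-i=l-k$ unchanged, so the parity conditions defining $\oinv,\einv,\onsp,\ensp$ survive intact and the corresponding contributions are exactly $\oinv(\sigma),\einv(\sigma),\onsp(\sigma),\ensp(\sigma)$.

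For the part \emph{involving} the first coordinate, the bound $|\sigma(i)|<n$ does the work. Since $\check\sigma(1)=-n$ is strictly smaller than every later entry, position $1$ creates no inversions, so $\oinv$ and $\einv$ receive no correction. Since $1$ is odd and $\check\sigma(1)<0$, the first position contributes $1$ to $\oneg$ and $0$ to $\eneg$, yielding the ``$+1$''. Finally, for the negative-sum pairs one has $-n+\sigma(l-1)<0$ for \emph{every} $l$, so every pair $(1,l)$ is counted and it remains only to sieve by the parity of $l-1$: the even values of $l$ in $[2,n]$ number $\lceil(n-1)/2\rceil$ (these feed $\onsp$, where $l-1$ must be odd) and the odd values number $\lfloor(n-1)/2\rfloor$ (these feed $\ensp$), producing the two correction terms.

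Summing the two parts in each case gives the six identities. There is no genuine obstacle: the only steps demanding care are the parity bookkeeping—remembering that the index shift flips the parity of a single position but fixes the parity of a difference—and getting the ceiling/floor split right for the first-position contributions to $\onsp$ and $\ensp$, which rest on the identities $\lfloor n/2\rfloor=\lceil(n-1)/2\rceil$ and $\lceil n/2\rceil-1=\lfloor(n-1)/2\rfloor$.
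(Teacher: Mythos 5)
Your proposal is correct, and it is exactly the ``straightforward verification from the definitions'' that the paper explicitly omits for this lemma: you split each statistic according to whether position $1$ is involved, use the bound $|\sigma(i)|\le n-1$ to see that position $1$ creates no inversions but completes every negative sum pair, and correctly track that the index shift flips the parity of a single position (whence the swap of $\oneg$ and $\eneg$) while preserving the parity of differences (whence $\oinv$, $\einv$, $\onsp$, $\ensp$ carry over unchanged). The floor/ceiling counts for the pairs $(1,l)$ are also right, so there is nothing to correct.
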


The key observation to prove the formulae of the signed multivariate distributions is  the following, which is analogous to \cite[Lemma~3.3]{BC2}.

\begin{lem}\label{zerotri}
Let $\sigma \in B_n$, $s \in \{ \oneg, \eneg, \onsp, \ensp \}$, and $a \in [\pm n] \setminus \{\pm 1,\pm n\}$. Then, if $\sigma^*:=\sigma(a-1,a+1)(-a+1,-a-1)$, one has:
\[ s(\sigma^*)=s(\sigma),  \quad \quad
\ell(\sigma^*)=\ell(\sigma)\pm 1.
\]
Furthermore, if $a=\sigma^{-1}(n)$, then $\oinv(\sigma^*)=\oinv(\sigma)$.
\end{lem}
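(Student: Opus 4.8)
The plan is to read the single operation $\sigma\mapsto\sigma^*$ as right multiplication by the reflection $\tau:=(a-1,a+1)(-a+1,-a-1)=s_{e_{a-1}-e_{a+1}}$ and to track each statistic through it. First I would reduce to $a>0$: since $\tau$ is unchanged when $a$ is replaced by $-a$, we may assume $a\in\{2,\dots,n-1\}$, so that the two affected positions $a-1$ and $a+1$ both lie in $[n]$ and, crucially, have the same parity. Then $\sigma^*=\sigma\tau$ is obtained from $\sigma$ by interchanging the values in positions $a-1$ and $a+1$, leaving all other window entries fixed; in particular $\sigma(a)$ is unchanged.

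For the invariance $s(\sigma^*)=s(\sigma)$ with $s\in\{\oneg,\eneg,\onsp,\ensp\}$, the equal parity of the two positions is the whole point. For $\oneg$ and $\eneg$ it is immediate: interchanging two entries of the same parity class preserves, in each class, the number of negative values. For $\onsp$ and $\ensp$ I would pair up the affected summands: for every index $k\notin\{a-1,a+1\}$ the pairs $\{k,a-1\}$ and $\{k,a+1\}$ have index differences of the same parity (they differ by $2$), and interchanging $\sigma(a-1)$ with $\sigma(a+1)$ merely swaps the two truth values $\chi(\sigma(k)+\sigma(a-1)<0)$ and $\chi(\sigma(k)+\sigma(a+1)<0)$ between these two pairs; the pair $\{a-1,a+1\}$ itself has even difference and symmetric sum, so it is fixed. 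Hence each parity class of negative-sum pairs keeps its cardinality, giving both equalities simultaneously. The same bookkeeping, now ignoring the parity refinement, yields $\nneg(\sigma^*)=\nneg(\sigma)$ and $\nsp(\sigma^*)=\nsp(\sigma)$.

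For the length statement I would use $\ell=\inv+\nneg+\nsp$ in type $B$, so that by the previous paragraph $\ell(\sigma^*)-\ell(\sigma)=\inv(\sigma^*)-\inv(\sigma)$. Writing $x=\sigma(a-1)$, $y=\sigma(a)$, $z=\sigma(a+1)$, the outside indices again contribute in cancelling pairs, so the entire change comes from the three internal pairs and equals $\sgn(z-y)+\sgn(z-x)+\sgn(y-x)$. This is a sum of three $\pm1$ terms, hence odd, which reflects that $\tau$ is a reflection and records the sign reversal $(-1)^{\ell(\sigma^*)}=-(-1)^{\ell(\sigma)}$, i.e. $\ell(\sigma^*)=\ell(\sigma)\pm1$. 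When $a=\sigma^{-1}(n)$ we have $y=n$, so two of the three signs are forced, namely $\sgn(z-n)=-1$ and $\sgn(n-x)=+1$, and the sum collapses to $\sgn(z-x)$, giving exactly $\pm1$.

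Finally, for $\oinv$ under the hypothesis $a=\sigma^{-1}(n)$ I would rerun the pairing argument keeping only odd index differences. Outside indices cancel in pairs as before, and the even-difference pair $\{a-1,a+1\}$ does not enter $\oinv$ at all. The only possible change comes from the odd-difference pairs $\{a-1,a\}$ and $\{a,a+1\}$ through the fixed middle position; but since $\sigma(a)=n$ is maximal, $\{a,a+1\}$ is an inversion and $\{a-1,a\}$ is not, both before and after the interchange, so their combined contribution stays equal to $1$. Hence $\oinv(\sigma^*)=\oinv(\sigma)$. I expect the bookkeeping for $\onsp$ and $\ensp$ --- keeping the parities of the index differences straight while the summands migrate between paired positions --- to be the main obstacle; the maximal value $n$ sitting at position $a$ is precisely what rescues the $\oinv$ claim and the exact length change, which would otherwise be only determined up to parity.
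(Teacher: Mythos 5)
Your argument is correct, and there is in fact nothing in the paper to compare it against: the paper states this lemma as ``the key observation,'' declares it analogous to \cite[Lemma~3.3]{BC2}, and omits the proof entirely. Your verification --- the reduction to $a>0$, the observation that positions $a-1$ and $a+1$ have the same parity, the pairing of $\{k,a-1\}$ with $\{k,a+1\}$ for outside indices $k$, and the explicit bookkeeping of the three internal pairs --- is precisely the omitted straightforward check, so it fills a real gap rather than duplicating an existing argument.

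One substantive point, which you half-noticed but should state more sharply: as literally stated, the claim $\ell(\sigma^*)=\ell(\sigma)\pm 1$ for \emph{arbitrary} $a$ is false. Since $\nneg$ and $\nsp$ are invariant, the length change equals the change in $\inv$, which by your own computation is $\sgn(z-y)+\sgn(y-x)+\sgn(z-x)$, and this can equal $\pm 3$: take $\sigma=[1,2,3]\in B_3$ and $a=2$, so that $\sigma^*=[3,2,1]$ and $\ell(\sigma^*)-\ell(\sigma)=3$. For general $a$ one can only conclude that the length change is odd; the exact $\pm 1$ requires the ``furthermore'' hypothesis $a=\sigma^{-1}(n)$ (so $y=n$), exactly as you show. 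This weakening is harmless, because in the only place the lemma is used --- the cancellation \eqref{sumtri} in the proof of Theorem~\ref{triv2} --- only the sign reversal $(-1)^{\ell(\sigma^*)}=-(-1)^{\ell(\sigma)}$ matters. However, your sentence ``hence odd, \dots\ i.e.\ $\ell(\sigma^*)=\ell(\sigma)\pm1$'' conflates the two statements: delete the ``i.e.''\ clause and assert only oddness at that point, reserving the $\pm1$ conclusion (together with the $\oinv$ invariance, for which your argument is complete and correct) for the case $\sigma(a)=n$.
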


In the following we let $x_1^{o(\sigma)}$ denote $x_1^{\oneg(\sigma)}$, $x_2^{e(\sigma)}$ denote $x_2^{\eneg(\sigma)}$, $y^{o(\sigma)}$ denote $y^{\oinv(\sigma)}$, $z^{o(\sigma)}$ denote $z^{\onsp(\sigma)}$, and analogously for the even statistics, to lighten the notation.

\begin{thm}
\label{triv2}
Let $n \in \mathbb P$.  Then 
\begin{equation*}
\sum_{\sigma \in B_{n}}{(-1)^{\ell(\sigma)}x_1^{o(\sigma)} x_2^{e(\sigma)} y^{o(\sigma)}z^{e(\sigma)}}  = 
\left\{ \begin{array}{ll}
\displaystyle 
\prod_{i=1}^{n-1}(1+ (-1)^i y^{\lceil \frac{i}{2} \rceil})
\prod_{i=0}^{\lfloor \frac{n-2}{2} \rfloor}(1-x_1 x_2 z^{2i}), 
& \mbox{if $n \equiv 0 \pmod 2$,} \\
\displaystyle (1-x_1 z^{\frac{n-1}{2}})
\prod_{i=1}^{n-1}(1+ (-1)^i y^{\lceil \frac{i}{2} \rceil})
\prod_{i=0}^{\lfloor \frac{n-2}{2} \rfloor}(1-x_1 x_2 z^{2i}),  
& \mbox{if $n \equiv 1 \pmod 2$,}
\end{array} \right.
\end{equation*}

\end{thm}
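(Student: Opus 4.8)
The plan is to prove the identity by induction on $n$, reducing $B_n$ to $B_{n-1}$ by a sign-reversing involution that isolates four boundary families. For the base case $n=1$ one checks directly that $B_1=\{[1],[-1]\}$ contributes $1-x_1$ (here $[-1]$ has $\ell=\oneg=1$ and all other tracked statistics zero), which matches the stated product, all occurring products being empty and hence equal to $1$.

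For the inductive step I would first apply Lemma~\ref{zerotri} with $a=\sigma^{-1}(n)$: the map $\sigma\mapsto\sigma(a-1,a+1)(-a+1,-a-1)$ fixes the position of the value $n$, preserves $\oneg,\eneg,\onsp,\ensp$ and, precisely because $a=\sigma^{-1}(n)$, also preserves $\oinv$, while changing $\ell$ by $\pm1$. Since it has no fixed points on the set where $|\sigma^{-1}(n)|\notin\{1,n\}$, all such terms cancel in the signed sum. The surviving $\sigma$ are exactly those with $\pm n$ in position $1$ or position $n$, and these split into the four families $\check\tau,\hat\tau$ (value $\mp n$ prepended) and $\tilde\tau$, $[\tau(1),\dots,\tau(n-1),n]$ (value $\mp n$ appended), each in bijection with $\tau\in B_{n-1}$.

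Writing $F_n(x_1,x_2,y,z)$ for the left-hand side, I would translate each family via Lemmas~\ref{tilde}, \ref{hat}, and \ref{check}, the appended-$n$ case being immediate since it changes no statistic and no sign. The crucial point is that \emph{prepending} shifts every position by one and hence swaps the roles of odd and even negatives, i.e.\ interchanges $x_1\leftrightarrow x_2$, whereas appending does not. Reading off the sign changes from $\ell=\inv+\nneg+\nsp=(\oinv+\einv)+(\oneg+\eneg)+(\onsp+\ensp)$ (Proposition~\ref{prelim}), this yields the recursion
\[
F_n = F_{n-1}(x_1,x_2,y,z)\,\bigl(1-x_1^{1-\delta}x_2^{\delta}y^{a}z^{b}\bigr) + F_{n-1}(x_2,x_1,y,z)\,\bigl((-1)^{n-1}y^{a}+(-1)^{n}x_1z^{b}\bigr),
\]
with $a=\lceil\tfrac{n-1}{2}\rceil$, $b=\lfloor\tfrac{n-1}{2}\rfloor$ and $\delta=\chi(n\equiv 0\pmod 2)$.

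It then remains to substitute the inductive hypothesis and simplify, and here the argument genuinely splits by the parity of $n$. When $n$ is odd, $n-1$ is even, the hypothesis is symmetric in $x_1,x_2$, so $F_{n-1}(x_2,x_1,\cdot)=F_{n-1}(x_1,x_2,\cdot)$ factors out and the bracketed multipliers combine cleanly to produce both the new factor $(1+(-1)^{n-1}y^{\lceil(n-1)/2\rceil})$ in the $y$-product and the lone factor $(1-x_1z^{(n-1)/2})$. The main obstacle is the even case: then $n-1$ is odd, the hypothesis carries the asymmetric factor $(1-x_1z^{(n-2)/2})$, and $F_{n-1}(x_2,x_1,\cdot)$ truly differs from $F_{n-1}(x_1,x_2,\cdot)$. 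One must verify that the cross terms telescope, the lone $x_1$- and $x_2$-factors recombining with the multipliers into the \emph{symmetric} factor $(1-x_1x_2z^{n-2})$ that extends $\prod_{i=0}^{\lfloor(n-2)/2\rfloor}(1-x_1x_2z^{2i})$, while the $y$-part absorbs $(1-y^{\lceil(n-1)/2\rceil})$. I expect this parity-dependent bookkeeping, rather than any conceptual subtlety, to be the crux of the computation.
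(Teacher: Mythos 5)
Your proposal is correct and follows essentially the same route as the paper's own proof: induction on $n$, cancellation of all $\sigma$ with $|\sigma^{-1}(n)|\notin\{1,n\}$ via the involution of Lemma~\ref{zerotri}, translation of the four boundary families through Lemmas~\ref{tilde}, \ref{hat}, and \ref{check}, and a parity-split simplification (the even-case telescoping you defer does work out exactly as you describe, yielding $(1-y^{n/2})(1-x_1x_2z^{n-2})$ times the inductive factor).

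One point is worth recording: your signs are the correct ones, and they repair two slips in the paper's own argument. Summing the increments in Lemmas~\ref{tilde} and \ref{check} gives $\ell(\tilde\sigma)-\ell(\sigma)=2n-1$ and $\ell(\check\sigma)-\ell(\sigma)=n$, so the sign factors attached to $f(\tilde\sigma)$ and $f(\check\sigma)$ must be $-1$ and $(-1)^n$, exactly as in your recursion; the paper's displays instead carry $(-1)^{n-1}$ and $+1$, which agree with the truth only when $n$ is even. Consequently the paper's odd-case computation ends with $(1+x_1z^{\frac{n-1}{2}})(1+y^{\frac{n-1}{2}})\cdots$, which contradicts the very statement being proved, whereas your multipliers combine to the correct $(1+y^{\frac{n-1}{2}})(1-x_1z^{\frac{n-1}{2}})\cdots$. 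That the minus sign is the right one can be confirmed directly, e.g.\ the only $\sigma\in B_3$ with $\oneg=1$, $\eneg=\oinv=0$, $\ensp=1$ is $[-3,1,2]$, which has odd length, so the coefficient of $x_1z$ in the $n=3$ generating function is $-1$, matching $(1-x_1z)(1-y^2)(1-x_1x_2)$.
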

\begin{proof}
We proceed by induction on $n \in \PP$, the result being easy to check if $n \leq 2$. Assume $n \geq 3$. Let $f(\sigma)=(-1)^{\ell(\sigma)}x_1^{o(\sigma)} x_2^{e(\sigma)} y^{o(\sigma)}z^{e(\sigma)}$ for all $\sigma \in B_n$. 
By Lemma \ref{zerotri} we have:
\begin{eqnarray}\nonumber
\sum_{\sigma \in B_n}{f(\sigma)}&=& \sum_{ \substack{\{\sigma\in B_n :\\ \sigma(n)=n\}}}{f(\sigma)} +\sum_{ \substack{\{\sigma\in B_n :\\ \sigma(-n)=n\}}}{f(\sigma)} +\sum_{ \substack{\{\sigma\in B_n :\\ \sigma(1)=n\}}}{f(\sigma)}+\sum_{\substack{\{\sigma\in B_n :\\ \sigma(-1)=n\}}}{f(\sigma)}\\\label{sumtri}
&=& \sum_{\sigma \in B_{n-1}}
\big( f(\sigma)+f(\tilde\sigma)+f(\hat\sigma)+f(\check\sigma) \big) 
\end{eqnarray}
where $\tilde\sigma,\,\hat\sigma,\,\check\sigma$ are as in the previous lemmas.
But, by Lemmas \ref{tilde}, \ref{hat}, and \ref{check} we have that
\[
(-1)^{\ell(\tilde\sigma)}x_1^{o(\tilde\sigma)} x_2^{e(\tilde\sigma)} y^{o(\tilde\sigma)}z^{e(\tilde\sigma)}
=x_1^{1-\delta(n)} x_2^{\delta(n)} y^{\lceil \frac{n-1}{2} \rceil } z^{\lfloor \frac{n-1}{2} \rfloor} (-1)^{n-1} (-1)^{\ell(\sigma)} x_1^{o(\sigma)} x_2^{e(\sigma)} y^{o(\sigma)} z^{e(\sigma)},
\] 
\[
{(-1)^{\ell(\hat\sigma)}x_1^{o(\hat\sigma)}x_2^{e(\hat\sigma)}
y^{o(\hat\sigma)}z^{e(\hat\sigma)}}= (-1)^{n-1} y^{\lceil \frac{n-1}{2}  \rceil} {(-1)^{\ell(\sigma)}x_1^{e(\sigma)}x_2^{o(\sigma)}
y^{o(\sigma)}z^{e(\sigma)}} \\
\]
and
\[
{(-1)^{\ell(\check\sigma)}x_1^{o(\check\sigma)}x_2^{e(\check\sigma)}
y^{o(\check\sigma)} z^{e(\check\sigma)}}= x_1 z^{\lfloor \frac{n-1}{2} \rfloor} 
{(-1)^{\ell(\sigma)}x_1^{e(\sigma)} x_2^{o(\sigma)} y^{o(\sigma)} z^{e(\sigma)}}
\]
for all $\sigma \in B_{n-1}$. 

Suppose now that $n \equiv 0 \pmod 2$. Then by our induction hypotheses we have that
\begin{eqnarray*}
\sum_{\sigma \in B_{n}} f(\sigma)
& = & (1-x_2 y^{\frac{n}{2}} z^{\frac{n-2}{2}}) 
\sum_{\sigma\in B_{n-1}} (-1)^{\ell(\sigma)} x_1^{o(\sigma)} x_2^{e(\sigma)} y^{o(\sigma)} z^{e(\sigma)} \\
&+& (x_1 z^{\frac{n-2}{2}}-y^{\frac{n}{2}}) 
\sum_{\sigma\in B_{n-1}} (-1)^{\ell(\sigma)} x_1^{e(\sigma)}  x_2^{o(\sigma)} y^{o(\sigma)} z^{e(\sigma)} \\
&=&(1-x_2 y^{\frac{n}{2}} z^{\frac{n-2}{2}}) 
(1-x_1 z^{\frac{n-2}{2}}) 
\prod_{i=1}^{n-2}(1+(-1)^{i} y^{\lceil  \frac{i}{2} \rceil}) \prod_{i=0}^{\frac{n-4}{2}}(1-x_1 x_2 z^{2i}) \\
& + & (x_1 z^{\frac{n-2}{2}}-y^{\frac{n}{2}})(1-x_2 z^{\frac{n-2}{2}}) 
\prod_{i=1}^{n-2} (1+(-1)^{i} y^{\lceil  \frac{i}{2} \rceil}) \prod_{i=0}^{\frac{n-4}{2}} (1-x_1 x_2 z^{2i}) \\
 & = & (1+x_1 x_2 y^{\frac{n}{2}} z^{n-2}-y^{\frac{n}{2}}-x_1 x_2 z^{n-2}) 
\prod_{i=1}^{n-2} (1+(-1)^{i} y^{\lceil  \frac{i}{2} \rceil}) \prod_{i=0}^{\frac{n-4}{2}}(1-x_1 x_2 z^{2i})
\end{eqnarray*}
and the result follows. Similarly, if $n \equiv 1 \pmod 2$ 
then we obtain that
\begin{eqnarray*}
\sum_{\sigma \in B_{n}} f(\sigma)
& = & (1+x_1 y^{\frac{n-1}{2}} z^{\frac{n-1}{2}}) 
\sum_{\sigma\in B_{n-1}} (-1)^{\ell(\sigma)} x_1^{o(\sigma)} x_2^{e(\sigma)} y^{o(\sigma)} z^{e(\sigma)} \\
&+& (x_1 z^{\frac{n-1}{2}}+y^{\frac{n-1}{2}}) 
\sum_{\sigma\in B_{n-1}} (-1)^{\ell(\sigma)} x_1^{e(\sigma)}  x_2^{o(\sigma)} y^{o(\sigma)} z^{e(\sigma)} \\
&=&(1+x_1 y^{\frac{n-1}{2}} z^{\frac{n-1}{2}}) 
\prod_{i=1}^{n-2}(1+(-1)^{i} y^{\lceil  \frac{i}{2} \rceil}) \prod_{i=0}^{\frac{n-3}{2}}(1-x_1 x_2 z^{2i}) \\
& + &(x_1 z^{\frac{n-1}{2}}+y^{\frac{n-1}{2}}) 
\prod_{i=1}^{n-2} (1+(-1)^{i} y^{\lceil  \frac{i}{2} \rceil}) \prod_{i=0}^{\frac{n-3}{2}} (1-x_1 x_2 z^{2i}) \\
 & = & (1+x_1 y^{\frac{n-1}{2}} z^{\frac{n-1}{2}}+y^{\frac{n-1}{2}}+x_1  z^{\frac{n-1}{2}}) 
\prod_{i=1}^{n-2} (1+(-1)^{i} y^{\lceil  \frac{i}{2} \rceil}) \prod_{i=0}^{\lfloor \frac{n-2}{2} \rfloor}(1-x_1 x_2 z^{2i}) \\
 & = & (1+x_1 z^{\frac{n-1}{2}}) (1+y^{\frac{n-1}{2}}) 
\prod_{i=1}^{n-2} (1+(-1)^{i} y^{\lceil  \frac{i}{2} \rceil}) \prod_{i=0}^{\lfloor \frac{n-2}{2} \rfloor}(1-x_1 x_2 z^{2i}),
\end{eqnarray*}
and the result again follows.
\end{proof}

As an immediate corollary of the previous result we obtain the generating function for the signed distribution of $L_{\Phi}$
for root systems of type $C$.
\begin{cor}
Let $n\in \N$, $n\geq 2$. Then
\[
\sum_{\sigma \in W(\Phi(C_n))}{(-1)^{\ell(\sigma)}x^{L_{\phi(C_n)}(\sigma)}}= (1-x^{\left\lceil\frac{n}{2}\right\rceil})\prod_{i=1}^{\left\lceil\frac{n}{2}\right\rceil} (1-x^{2i})^2.
\]\end{cor}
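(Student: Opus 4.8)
The goal is to derive the Corollary from Theorem \ref{triv2} by a suitable specialization of the multivariate generating function, using the type $C$ description of $L_{\Phi(\Delta)}$ from Proposition \ref{oddcomb}.

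First I would recall from Proposition \ref{oddcomb} that for type $C$ we have $L_{\Phi(C_n)}(\sigma)=\nneg(\sigma)+\oinv(\sigma)+\ensp(\sigma)$, and observe that $\nneg(\sigma)=\oneg(\sigma)+\eneg(\sigma)$. Thus
\[
L_{\Phi(C_n)}(\sigma)=\oneg(\sigma)+\eneg(\sigma)+\oinv(\sigma)+\ensp(\sigma).
\]
This tells me exactly which specialization to apply in Theorem \ref{triv2}: I want to set $x_1=x_2=y=z=x$, since the four exponents $\oneg,\eneg,\oinv,\ensp$ are precisely the statistics tracked by the four variables $x_1,x_2,y,z$ in $f(\sigma)$. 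After this substitution the left-hand side of Theorem \ref{triv2} becomes exactly $\sum_{\sigma\in B_n}(-1)^{\ell(\sigma)}x^{L_{\Phi(C_n)}(\sigma)}$, which is the quantity I want (recall that $W(\Phi(C_n))=B_n$ as a group, by Proposition \ref{prelim}).

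Next I would carry out the same specialization $x_1=x_2=y=z=x$ on the right-hand side of Theorem \ref{triv2} and simplify. The first product $\prod_{i=1}^{n-1}(1+(-1)^i y^{\lceil i/2\rceil})$ becomes $\prod_{i=1}^{n-1}(1+(-1)^i x^{\lceil i/2\rceil})$, and the second product $\prod_{i=0}^{\lfloor (n-2)/2\rfloor}(1-x_1x_2z^{2i})$ becomes $\prod_{i=0}^{\lfloor(n-2)/2\rfloor}(1-x^{2+2i})=\prod_{i=1}^{\lfloor n/2\rfloor}(1-x^{2i})$; in the odd case there is an additional prefactor $(1-x_1z^{(n-1)/2})=(1-x^{1+(n-1)/2})=(1-x^{(n+1)/2})$. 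The main routine task is then to show that, after substitution, the first product combines so as to produce the factored form in the Corollary. The cleanest way is to split the index $i$ into its parity: pairing the term $i=2k-1$ (giving $1-x^k$) with the term $i=2k$ (giving $1+x^k$) yields $(1-x^k)(1+x^k)=1-x^{2k}$, so consecutive odd-even pairs collapse into factors $1-x^{2k}$. Carefully tracking how many complete pairs occur, and treating the leftover term and the explicit prefactor separately in the two parity cases for $n$, should reassemble everything into the single expression $(1-x^{\lceil n/2\rceil})\prod_{i=1}^{\lceil n/2\rceil}(1-x^{2i})^2$.

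The main obstacle I anticipate is purely bookkeeping: reconciling the ceiling/floor indices and the two cases $n$ even versus $n$ odd into one uniform closed form. In particular one must verify that the number of $(1-x^{2i})$ factors produced by the pairing, together with the factor coming from $\prod(1-x^{2i})$ and the leftover odd-index term, consistently gives the squared product $\prod_{i=1}^{\lceil n/2\rceil}(1-x^{2i})^2$ and the correct single factor $(1-x^{\lceil n/2\rceil})$, matching both parities simultaneously. No genuinely new idea is needed beyond Theorem \ref{triv2}; the argument is a specialization followed by elementary algebraic manipulation, and the only care required is in the index arithmetic and in confirming the two cases merge into the stated formula.
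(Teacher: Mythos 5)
Your strategy is exactly the paper's: the paper's entire proof consists of specializing Theorem \ref{triv2} at $x_1=x_2=y=z=x$, and your justification of that specialization (namely $L_{\Phi(C_n)}=\nneg+\oinv+\ensp=\oneg+\eneg+\oinv+\ensp$, so the four variables track exactly the right statistics) is correct. The problem is the step you defer with ``should reassemble everything into the single expression.'' It does not. Carry the bookkeeping out: for $n$ odd the prefactor is $(1-x^{\lceil n/2\rceil})$, the paired first product is $\prod_{k=1}^{(n-1)/2}(1-x^{2k})$, and the second product is $\prod_{i=1}^{(n-1)/2}(1-x^{2i})$, giving $(1-x^{\lceil n/2\rceil})\prod_{i=1}^{\lfloor n/2\rfloor}(1-x^{2i})^{2}$ --- upper limit $\lfloor n/2\rfloor$, not $\lceil n/2\rceil$. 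For $n$ even the leftover term $i=n-1$ gives $(1-x^{n/2})$, the pairs give $\prod_{k=1}^{(n-2)/2}(1-x^{2k})$, and the second product gives $\prod_{i=1}^{n/2}(1-x^{2i})$, so the factor $(1-x^{n})$ appears once, not squared. In both parities the specialization of Theorem \ref{triv2} falls short of the Corollary's right-hand side by a factor of $(1-x^{n})$ ($n$ even), respectively $(1-x^{n+1})^{2}$ ($n$ odd).

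Moreover, no amount of further index juggling can close this gap, because the Corollary as printed is false (the error is in the statement, and the paper's own one-line proof glosses over it just as yours does). For $n=2$, summing $(-1)^{\ell(\sigma)}x^{\nneg(\sigma)+\oinv(\sigma)+\ensp(\sigma)}$ over the eight elements of $B_2$ gives $1-x-x^{2}+x^{3}=(1-x)(1-x^{2})$, which agrees with the specialization of Theorem \ref{triv2}, whereas the printed formula gives $(1-x)(1-x^{2})^{2}$. One can also see the mismatch a priori: the degree of the signed generating function is at most the number of odd positive roots of $C_n$ (which is $3$ for $n=2$), while the printed product has degree $5$. The correct conclusion of your (and the paper's) argument is
\[
\sum_{\sigma \in W(\Phi(C_n))}(-1)^{\ell(\sigma)}x^{L_{\Phi(C_n)}(\sigma)}
=\bigl(1-x^{\lceil n/2\rceil}\bigr)\prod_{i=1}^{\lceil n/2\rceil-1}\bigl(1-x^{2i}\bigr)\prod_{i=1}^{\lfloor n/2\rfloor}\bigl(1-x^{2i}\bigr),
\]
which covers both parities uniformly. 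So your proof attempt fails precisely at the asserted reassembly: had you actually carried out the ``routine'' algebra, you would have discovered that it produces this formula and not the one you were asked to prove.
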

\begin{proof}
This follows immediately from Theorem \ref{triv2} by letting  $x_1=x_2=y=z=x$.
\end{proof}

The following result gives the signed  generating function of $x^{\oneg(\cdot)}y^{\oinv(\cdot)}
z^{\onsp(\cdot)}$ and $x^{\eneg(\cdot)}y^{\oinv(\cdot)}
z^{\onsp(\cdot)}$ over the hyperoctahedral group $B_n$.
The proof is analogous to that of Theorem \ref{triv2} and is therefore omitted.

\begin{thm}\label{triv1}
Let $n \in \mathbb P$.  Then 
\begin{equation}
\label{triooo}
\sum_{\sigma \in B_{n}}{(-1)^{\ell(\sigma)}x^{o(\sigma)}y^{o(\sigma)}z^{o(\sigma)}}  = \left\{ \begin{array}{ll}
\displaystyle (1-x) (1-y^\frac{n}{2} z^\frac{n}{2})
\prod_{i=1}^{\lfloor \frac{n-1}{2} \rfloor}(1-x z^{2i})(1-y^{2i}), 
& \mbox{if $n \equiv 0 \pmod 2$,} \\
\displaystyle (1-x) 
\prod_{i=1}^{\lfloor \frac{n-1}{2} \rfloor}(1-x z^{2i})(1-y^{2i}), 
& \mbox{if $n \equiv 1 \pmod 2$,}
\end{array} \right.
\end{equation}
and
\begin{equation}
\label{trieoo}
\sum_{\sigma \in B_{n}}{(-1)^{\ell(\sigma)}x^{e(\sigma)}y^{o(\sigma)}z^{o(\sigma)}}  = \left\{ \begin{array}{ll}
\displaystyle (1-x) (z^\frac{n}{2}-y^\frac{n}{2} )
\prod_{i=1}^{\lfloor \frac{n-1}{2} \rfloor}(1-x z^{2i})(1-y^{2i}), 
& \mbox{if $n \equiv 0 \pmod 2$,} \\
\displaystyle 0, 
& \mbox{if $n \equiv 1 \pmod 2$.}
 \end{array} \right.
\end{equation}

%\begin{align}\label{trioooev}
%\sum_{\sigma \in B_{2m}}{(-1)^{\ell(\sigma)}x^{o(\sigma)}y^{o(\sigma)}z^{o(\sigma)}}&
%=(1-x)(1-y^m z^m)\prod_{i=1}^{m-1}(1-x z^{2i})(1-y^{2i}) \\ \label{triooood}
%\sum_{\sigma \in B_{2m+1}}{(-1)^{\ell(\sigma)}x^{o(\sigma)}y^{o(\sigma)}z^{o(\sigma)}}&=(1-x)\prod_{i=1}^{m}(1-x z^{2i})(1-y^{2i}) \\  \label{trieooev}  
%\sum_{\sigma \in B_{2m}}{(-1)^{\ell(\sigma)}x^{e(\sigma)}y^{o(\sigma)}z^{o(\sigma)}}&=(1-x)(z^m-y^m)\prod_{i=1}^{m-1}(1-y^{2i})(1-x z^{2i}) \\  \label{trieoood}
%\sum_{\sigma \in B_{2m+1}}{(-1)^{\ell(\sigma)}x^{e(\sigma)}y^{o(\sigma)}z^{o(\sigma)}}&=0.
%\end{align}
\end{thm}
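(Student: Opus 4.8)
The plan is to follow the proof of Theorem~\ref{triv2} almost verbatim, the one essential change being that the two identities \eqref{triooo} and \eqref{trieoo} must be established \emph{simultaneously}, since the recursion couples them. Set
\[
A_n:=\sum_{\sigma\in B_n}(-1)^{\ell(\sigma)}x^{\oneg(\sigma)}y^{\oinv(\sigma)}z^{\onsp(\sigma)},\qquad
B_n:=\sum_{\sigma\in B_n}(-1)^{\ell(\sigma)}x^{\eneg(\sigma)}y^{\oinv(\sigma)}z^{\onsp(\sigma)},
\]
so that \eqref{triooo} and \eqref{trieoo} are exactly the claims about $A_n$ and $B_n$. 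I would prove both by a single induction on $n$, the cases $n\le 2$ being immediate from the definitions; assume $n\ge 3$.

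For the inductive step I would argue as for \eqref{sumtri}. Given $\sigma\in B_n$ put $a:=\sigma^{-1}(n)$; if $a\notin\{\pm1,\pm n\}$ then, by Lemma~\ref{zerotri}, the map $\sigma\mapsto\sigma^*=\sigma(a-1,a+1)(-a+1,-a-1)$ is a fixed-point-free, sign-reversing involution fixing each of $\oneg,\eneg,\onsp$ and, because $a=\sigma^{-1}(n)$, also fixing $\oinv$; hence all such terms cancel in both $A_n$ and $B_n$. The surviving $\sigma$ are those with $\sigma^{-1}(n)\in\{\pm1,\pm n\}$, i.e.\ the four copies of $B_{n-1}$ given by the embedding $\sigma\mapsto\sigma$ and the maps $\sigma\mapsto\tilde\sigma,\hat\sigma,\check\sigma$ of Lemmas~\ref{tilde}--\ref{check}, so that $A_n=\sum_{\sigma\in B_{n-1}}\big(g(\sigma)+g(\tilde\sigma)+g(\hat\sigma)+g(\check\sigma)\big)$ with $g(\sigma)=(-1)^{\ell(\sigma)}x^{\oneg(\sigma)}y^{\oinv(\sigma)}z^{\onsp(\sigma)}$, and analogously for $B_n$.

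I would then evaluate the four contributions using Lemmas~\ref{tilde}, \ref{hat}, \ref{check} together with $\ell=\inv+\nneg+\nsp$, which yields the sign factors $-1$, $(-1)^{n-1}$, and $(-1)^{n}$ for $\tilde\sigma,\hat\sigma,\check\sigma$ respectively. Writing $c=\lceil\frac{n-1}{2}\rceil$ and $\delta=\chi(n\equiv0\pmod 2)$, this produces the coupled recursions
\[
A_n=\big(1-x^{1-\delta}y^{c}z^{c}\big)A_{n-1}+\big((-1)^{n-1}y^{c}+(-1)^{n}x\,z^{c}\big)B_{n-1},
\]
\[
B_n=\big(1-x^{\delta}y^{c}z^{c}\big)B_{n-1}+\big((-1)^{n-1}y^{c}+(-1)^{n}z^{c}\big)A_{n-1}.
\]
The structural reason the two sums are coupled is that $\hat{}$ and $\check{}$ interchange $\oneg$ and $\eneg$ (Lemmas~\ref{hat}, \ref{check}), so the $\hat\sigma,\check\sigma$ terms of $A_n$ carry $x^{\eneg(\sigma)}$ and feed $B_{n-1}$, and symmetrically the corresponding terms of $B_n$ feed $A_{n-1}$. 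It then remains to substitute the inductive product formulas and simplify, separating the parities of $n$.

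The final algebraic simplification is routine; the main things to get right are the parity bookkeeping (the interplay of $\delta$, the ceilings/floors, and the three sign factors) and the observation that the induction must close on the \emph{pair} $(A_n,B_n)$ rather than on $A_n$ alone. The one pleasant point worth isolating is the vanishing in the odd case of \eqref{trieoo}: for $n$ odd one has $\delta=0$, $(-1)^{n-1}=1$, $(-1)^n=-1$, so that after substituting the inductive formulas both terms of $B_n$ share the common factor $(1-x)(1-y^{c}z^{c})\prod_{i}(1-xz^{2i})(1-y^{2i})$, and the residual scalars $(z^{c}-y^{c})$ coming from $B_{n-1}$ and $(y^{c}-z^{c})$ coming from the $A_{n-1}$-term cancel, giving $B_n=0$. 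The remaining substitutions telescope the products exactly as in the proof of Theorem~\ref{triv2}, so the stated formulas follow.
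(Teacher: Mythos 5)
Your proposal is correct and is essentially the proof the paper intends: the paper omits the proof of Theorem~\ref{triv1}, stating only that it is analogous to that of Theorem~\ref{triv2}, and your coupled induction on the pair $(A_n,B_n)$ — reducing via the involution of Lemma~\ref{zerotri} to the four embeddings of $B_{n-1}$ and applying Lemmas~\ref{tilde}, \ref{hat}, \ref{check}, with the coupling forced because $\hat\sigma$ and $\check\sigma$ interchange $\oneg$ and $\eneg$ — is exactly that adaptation. Your sign factors $-1$, $(-1)^{n-1}$, $(-1)^{n}$ for $\tilde\sigma,\hat\sigma,\check\sigma$ are the correct ones (since $\ell(\tilde\sigma)-\ell(\sigma)=2n-1$, $\ell(\hat\sigma)-\ell(\sigma)=n-1$, $\ell(\check\sigma)-\ell(\sigma)=n$; the displayed formulas in the paper's proof of Theorem~\ref{triv2} contain sign typos here), and your two recursions do close on the stated product formulas in both parity cases.
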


Note that the generating function of $(-1)^{\ell(\sigma)} x_{1}^{\oneg(\sigma)} x_{2}^{\eneg(\sigma)}y^{\oinv(\sigma)}z^{\onsp(\sigma)}$
over $\sigma \in B_n$
does not factor nicely in general. For example, 
$ \sum_{\sigma \in B_4} (-1)^{\ell(\sigma)} x_{1}^{\oneg(\sigma)} x_{2}^{\eneg(\sigma)}y^{\oinv(\sigma)}z^{\onsp(\sigma)} =
(1-y^2) (1- x_1 x_2 z^2) (1+x_1 x_2 y^2 z^2 -x_1 x_2 z^2 
- x_2 y^2 z^2 + x_1 z^2 + x_2 y^2 -x_1 -y^2 )$.

As an immediate corollary of Theorem \ref{triv1} we obtain the
following result, which also follows from Proposition \ref{oddcomb} and
Theorem 5.4 of \cite{BC1}.
\begin{cor}
Let $n\in \N$, $n\geq 2$. Then
\[
\sum_{\sigma \in W(\Phi(B_n))}{(-1)^{\ell(\sigma)}x^{L_{\Phi(B_n)}(\sigma)}}= \prod_{i=1}^{n} (1-x^{i}).
\]
\end{cor}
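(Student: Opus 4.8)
The plan is to obtain this as a specialization of the already-established trivariate factorization in Theorem~\ref{triv1}. First I would recall from Proposition~\ref{oddcomb} that, for the root system of type $B_n$ with the simple system fixed there, $L_{\Phi(B_n)}(\sigma) = \oneg(\sigma) + \oinv(\sigma) + \onsp(\sigma)$ for every $\sigma$ in the Weyl group. Under the notational conventions introduced just before Theorem~\ref{triv1}, this identity means that the single-variable generating function in the statement is exactly the left-hand side of~\eqref{triooo} with the three indeterminates collapsed into one, i.e.\ the specialization $x = y = z$.

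Second, I would carry out the substitution $x = y = z$ on the right-hand side of~\eqref{triooo} and simplify the resulting product, treating the two parities of $n$ separately. When $n$ is odd the right-hand side becomes $(1-x)\prod_{i=1}^{(n-1)/2}(1-x^{2i+1})(1-x^{2i})$: here the factors $(1-x^{2i})$ supply the even exponents $2,4,\ldots,n-1$, the factors $(1-x^{2i+1})$ supply the odd exponents $3,5,\ldots,n$, and the leading $(1-x)$ supplies the exponent $1$, so that every integer in $[n]$ appears exactly once and the product equals $\prod_{i=1}^{n}(1-x^i)$. When $n$ is even the right-hand side becomes $(1-x)(1-x^{n})\prod_{i=1}^{(n-2)/2}(1-x^{2i+1})(1-x^{2i})$: now the product covers the exponents $2,3,\ldots,n-1$, and the two boundary factors $(1-x)$ and $(1-x^{n})$ fill in the two missing ends, again yielding $\prod_{i=1}^{n}(1-x^i)$.

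The step I expect to require the most care is the second one, namely checking that the floor-function bounds in~\eqref{triooo} together with the boundary factors cover each exponent in $[n]$ exactly once in both parity classes; since the factor $y^{n/2}z^{n/2}$ collapses to $x^{n}$ only in the even case, the even and odd computations are genuinely distinct and must be verified independently. Beyond this bookkeeping the argument is immediate, and I would note that the same identity can alternatively be deduced from Proposition~\ref{oddcomb} together with Theorem~5.4 of~\cite{BC1}, giving an independent confirmation.
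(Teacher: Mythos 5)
Your proposal is correct and follows exactly the paper's route: the paper also obtains this corollary as an immediate specialization of Theorem~\ref{triv1} at $x=y=z$, via the identification $L_{\Phi(B_n)}=\oneg+\oinv+\onsp$ from Proposition~\ref{oddcomb}, and likewise notes the alternative derivation from Proposition~\ref{oddcomb} together with Theorem~5.4 of \cite{BC1}. Your parity-by-parity verification that the exponents $1,\ldots,n$ each occur exactly once is precisely the bookkeeping the paper leaves implicit.
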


We conclude by noting the following univariate natural special
cases of the multivariate results in this section.
For $n\in \mathbb P$, and $\sigma \in B_n$, we let
\begin{eqnarray*}
L_{ooe}(\sigma)&:=&\oneg(\sigma)+\oinv(\sigma)+\ensp(\sigma) \\
L_{eoe}(\sigma)&:=&\eneg(\sigma)+\oinv(\sigma)+\ensp(\sigma) \\
L_{eoo}(\sigma)&:=&\eneg(\sigma)+\oinv(\sigma)+\onsp(\sigma).
\end{eqnarray*}

\begin{cor}\label{cor:uni}
Let $n \in \N$, $n\geq 3$. Then
\begin{equation}\label{ooe}
\sum_{\sigma \in B_n}{(-1)^{\ell(\sigma)}x^{L_{ooe}(\sigma)}}=(1-x^{\left\lceil \frac{n}{2}\right\rceil})\prod_{i=i}^{n-1} (1-x^i),
\end{equation}
\begin{equation}\label{eoe}
\sum_{\sigma \in B_n}{(-1)^{\ell(\sigma)}x^{L_{eoe}(\sigma)}}=(1-x^{\left\lfloor \frac{n}{2}\right\rfloor})\prod_{i=i}^{n-1} (1-x^i ),
\end{equation}
and
\begin{equation}\label{eoo}
\sum_{\sigma \in B_n}{(-1)^{\ell(\sigma)}x^{L_{eoo}(\sigma)}}=0.
\end{equation}
\end{cor}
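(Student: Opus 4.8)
The three identities are all univariate specializations of the multivariate generating functions proved earlier in this section, and my plan is to read each one off from the appropriate theorem. Identities (\ref{ooe}) and (\ref{eoe}) will come from Theorem \ref{triv2}, while (\ref{eoo}) will come from equation (\ref{trieoo}) of Theorem \ref{triv1}.

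For (\ref{ooe}), recall that the exponents recorded by $(x_1,x_2,y,z)$ in Theorem \ref{triv2} are $(\oneg,\eneg,\oinv,\ensp)$. Since $L_{ooe}=\oneg+\oinv+\ensp$, I would set $x_1=y=z=x$ and $x_2=1$, which turns the left-hand side of Theorem \ref{triv2} into $\sum_{\sigma\in B_n}(-1)^{\ell(\sigma)}x^{L_{ooe}(\sigma)}$. For (\ref{eoe}), since $L_{eoe}=\eneg+\oinv+\ensp$, I would instead set $x_1=1$ and $x_2=y=z=x$.

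The substantive part is to simplify the right-hand side of Theorem \ref{triv2} under these substitutions. In both cases $x_1x_2=x$, so $x_1x_2z^{2i}=x^{2i+1}$ and the product $\prod_{i=0}^{\lfloor(n-2)/2\rfloor}(1-x_1x_2z^{2i})$ becomes a product of $(1-x^{j})$ over the odd exponents $j\in\{1,3,5,\dots\}$. Setting $y=x$ in $\prod_{i=1}^{n-1}(1+(-1)^iy^{\lceil i/2\rceil})$ makes consecutive terms pair as $(1-x^k)(1+x^k)=(1-x^{2k})$, supplying the even exponents, with one unpaired factor left over when $n-1$ is odd (that is, when $n$ is even). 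The one bookkeeping step that requires care is to verify that these odd and even exponents together exhaust $\{1,2,\dots,n-1\}$ exactly, giving $\prod_{j=1}^{n-1}(1-x^j)$, and then to identify the single remaining factor. When $n$ is even this remaining factor is the unpaired term $(1-x^{n/2})$, which equals both $(1-x^{\lceil n/2\rceil})$ and $(1-x^{\lfloor n/2\rfloor})$; when $n$ is odd there is no unpaired term but the extra factor $(1-x_1z^{(n-1)/2})$ present in Theorem \ref{triv2} survives, equalling $(1-x^{(n+1)/2})=(1-x^{\lceil n/2\rceil})$ for (\ref{ooe}) (where $x_1=x$) and $(1-x^{(n-1)/2})=(1-x^{\lfloor n/2\rfloor})$ for (\ref{eoe}) (where $x_1=1$). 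Tracking the index bounds and the presence or absence of this extra factor across the two parities of $n$ is the only delicate point, and it is a routine verification.

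Finally, (\ref{eoo}) follows at once from equation (\ref{trieoo}) of Theorem \ref{triv1}, whose exponents $(x,y,z)$ record $(\eneg,\oinv,\onsp)$. Since $L_{eoo}=\eneg+\oinv+\onsp$, I would set $x=y=z$. For $n$ odd the right-hand side of (\ref{trieoo}) already vanishes, and for $n$ even the factor $z^{n/2}-y^{n/2}$ becomes $0$ upon setting $y=z$; in both parities the sum is therefore $0$, as claimed.
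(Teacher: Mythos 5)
Your proposal is correct and follows exactly the route the paper intends: the corollary is stated as an immediate specialization of Theorem \ref{triv2} (with $x_2=1$, resp.\ $x_1=1$, and the remaining variables set to $x$) and of equation \eqref{trieoo} (with $x=y=z$), and your bookkeeping of the odd/even exponent products and the extra factor $(1-x_1z^{(n-1)/2})$ is accurate. The paper omits these routine verifications; you have supplied them correctly.
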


\subsection{Type $D$}\label{sec:multiD}
As for type $B$, we derive in this section signed multivariate generating functions for the statistics $\oinv,\, \onsp,\,\ensp$ over the even hyperoctahedral groups.

Our first result shows that 
 the signed joint distribution of $\oinv$ and $\ensp$ is zero.
\begin{pro}
	Let $n\in \mathbb P$. Then
	\begin{equation}
	\sum_{\sigma\in D_n}{(-1)^{\ell_D (\sigma)}x^{\oinv (\sigma)}y^{\ensp(\sigma) }} =0
	\end{equation}
\end{pro}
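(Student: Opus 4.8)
The plan is to avoid constructing a new involution on $D_n$ and instead reduce the statement to the type~$B$ computation already carried out in Theorem~\ref{triv2}. The key preliminary observation is that on $D_n$ the two relevant length functions agree in parity: every $\sigma \in D_n$ has an even number of negative entries, so $\ell_B(\sigma) = \inv(\sigma) + \nneg(\sigma) + \nsp(\sigma)$ and $\ell_D(\sigma) = \inv(\sigma) + \nsp(\sigma)$ differ by the even quantity $\nneg(\sigma)$, whence $(-1)^{\ell_D(\sigma)} = (-1)^{\ell_B(\sigma)}$ for all $\sigma \in D_n$. Thus the sum to be evaluated equals $\sum_{\sigma \in D_n}(-1)^{\ell_B(\sigma)}x^{\oinv(\sigma)}y^{\ensp(\sigma)}$.

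Next I would realize $D_n$ as the subset of $B_n$ on which $\nneg$ is even, via the indicator $\tfrac12\big(1 + (-1)^{\nneg(\sigma)}\big)$. This rewrites the quantity as
\[
\frac{1}{2}\sum_{\sigma \in B_n}(-1)^{\ell_B(\sigma)}x^{\oinv(\sigma)}y^{\ensp(\sigma)}
+ \frac{1}{2}\sum_{\sigma \in B_n}(-1)^{\nneg(\sigma)}(-1)^{\ell_B(\sigma)}x^{\oinv(\sigma)}y^{\ensp(\sigma)}.
\]
Writing $(-1)^{\nneg(\sigma)} = (-1)^{\oneg(\sigma)}(-1)^{\eneg(\sigma)}$, both summands are specializations of the generating function in Theorem~\ref{triv2}, where $\oneg,\eneg,\oinv,\ensp$ are recorded by $x_1,x_2,y,z$: the first arises by setting $x_1=x_2=1$, and the second by setting $x_1=x_2=-1$, in each case matching the theorem's variables $y$ (recording $\oinv$) and $z$ (recording $\ensp$) to the present $x$ and $y$.

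The crux, and essentially the only thing needing verification, is that both specializations vanish. In each case $x_1 x_2 = 1$, so the factor $\prod_{i=0}^{\lfloor (n-2)/2\rfloor}(1 - x_1 x_2 z^{2i})$ on the right-hand side of Theorem~\ref{triv2} contains its $i=0$ term $1 - x_1 x_2 = 0$. Since this product occurs for both parities of $n$ and is nonempty as soon as $n\geq 2$, each specialization is identically zero, and hence so is their average.

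I therefore expect the main (and rather minor) obstacle to be purely a matter of bookkeeping: correctly matching the variable names of Theorem~\ref{triv2} to the statistics $\oinv$ and $\ensp$ at hand, and confirming the presence of the vanishing factor for $n\geq 2$. A more self-contained alternative would be a direct sign-reversing involution, for instance the map of Lemma~\ref{zerotri} applied at the position $a=\sigma^{-1}(n)$, which preserves both $\oinv$ and $\ensp$ while changing $\ell_D$ by $\pm1$; however, that route would force a separate treatment of the boundary elements for which $n$ occupies a position $\pm 1$ or $\pm n$, precisely the case analysis the reduction above sidesteps.
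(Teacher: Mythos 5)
Your proof is correct, but it takes a genuinely different route from the paper. The paper argues directly on $D_n$: it constructs a sign-reversing involution $\sigma \mapsto \overline{\sigma}$ (right multiplication by $s_1$ or by $s_0^D$, according to the parity of the distance between the positions of the entries of absolute value $1$ and $2$), which changes $\ell_D$ by $\pm 1$ while preserving both $\oinv$ and $\ensp$. You instead reduce to type $B$: the parity observation $(-1)^{\ell_D(\sigma)}=(-1)^{\ell_B(\sigma)}$ on $D_n$ (since $\nneg$ is even there), the indicator $\tfrac12\bigl(1+(-1)^{\nneg(\sigma)}\bigr)$ realizing $D_n$ inside $B_n$, and the decomposition $(-1)^{\nneg}=(-1)^{\oneg}(-1)^{\eneg}$ are all sound, and both resulting $B_n$ sums are exactly the specializations $x_1=x_2=\pm 1$ of Theorem \ref{triv2} (whose $y,z$ record $\oinv,\ensp$, as the paper's notational conventions stipulate). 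Since in both specializations $x_1x_2=1$, the $i=0$ factor $1-x_1x_2z^0$ of the product $\prod_{i=0}^{\lfloor (n-2)/2\rfloor}(1-x_1x_2z^{2i})$, present for both parities of $n$, vanishes, and your conclusion follows for $n\geq 2$; there is no circularity, as Theorem \ref{triv2} is proved independently and earlier in the paper. What each approach buys: the paper's involution is self-contained and elementary, requiring none of the multivariate machinery, and being bijective it localizes the cancellation; your argument is shorter given Theorem \ref{triv2} and illustrates a reusable character-averaging technique for restricting generating functions to the index-two subgroup $D_n\subset B_n$, at the cost of invoking the full strength of the inductively proved type-$B$ theorem. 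One caveat shared with the paper: the statement as written claims $n\in\mathbb{P}$, but for $n=1$ the sum equals $1$ (and the paper's own involution is likewise undefined there), so your explicit restriction to $n\geq 2$ is not a defect of your proof.
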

\begin{proof}
We define, for $\sigma \in D_n$, the following involution:
$$\overline \sigma=\begin{cases} \sigma s_1, & \mbox{ if } ||\sigma^{-1}(1)|-|\sigma^{-1}(2)||\equiv 2 \pmod 2  \\
  \sigma s_0^D, & \mbox{ if } ||\sigma^{-1}(1)|-|\sigma^{-1}(2)||\equiv 1 \pmod 2. \end{cases}$$
  It is clear that in both cases $\ell_D(\overline{\sigma})=\ell_D(\sigma)\pm 1$. We now show that, for all $\sigma\in D_n$,  $\ensp(\sigma)=\ensp(\overline{\sigma})$ and $\oinv(\sigma)=\oinv(\overline{\sigma})$.
  
  Consider $\sigma$ for which the entries of absolute values $1$ and $2$ appear at an even distance. In this case the involution is defined by right multiplication by $s_1$, that is it exchanges these values. As the involution involves no sign changes,  $\ensp(\sigma)=\ensp(\overline{\sigma})$. The only inversion involved is between positions at even distance, thus  $\oinv(\sigma)=\oinv(\overline{\sigma})$. Similar reasoning shows that these equalities hold also in the other case. This implies the result.
\end{proof}
Similarly to Corollary~\ref{cor:uni}, the previous result implies the following result about the univariate signed generating function of  the statistic
\[L_{oe}(\sigma):=\oinv(\sigma)+\ensp(\sigma).\]
\begin{cor}
		Let $n\geq 2$. Then
		\begin{equation}
		\sum_{\sigma\in D_n}{(-1)^{\ell_D (\sigma)}x^{L_{oe}}(\sigma) } = %\sum_{\sigma\in B_n\setminus D_n}{(-1)^{\ell_D (\sigma)}x^{L_{oe}}(\sigma) }=
		0
			\end{equation}
\end{cor}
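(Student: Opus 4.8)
The plan is to deduce this corollary directly from the preceding proposition, exactly as $\text{Corollary \ref{cor:uni}}$ was deduced from $\text{Theorem \ref{triv1}}$. First I would observe that the univariate statistic $L_{oe}(\sigma)=\oinv(\sigma)+\ensp(\sigma)$ is obtained from the bivariate polynomial $x^{\oinv(\sigma)}y^{\ensp(\sigma)}$ by the substitution $y=x$. Hence the generating function in question is just the specialization of the two-variable signed generating function at $y=x$:
\[
\sum_{\sigma\in D_n}(-1)^{\ell_D(\sigma)}x^{L_{oe}(\sigma)}
=\left.\sum_{\sigma\in D_n}(-1)^{\ell_D(\sigma)}x^{\oinv(\sigma)}y^{\ensp(\sigma)}\right|_{y=x}.
\]

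Next I would invoke the proposition immediately above, which asserts that the bivariate signed joint distribution of $\oinv$ and $\ensp$ over $D_n$ vanishes identically:
\[
\sum_{\sigma\in D_n}(-1)^{\ell_D(\sigma)}x^{\oinv(\sigma)}y^{\ensp(\sigma)}=0.
\]
Since the right-hand side is the zero polynomial, any specialization of the variables leaves it equal to zero; in particular setting $y=x$ yields $0$. Combining this with the identity from the first step gives the claimed vanishing, completing the proof.

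There is essentially no obstacle here, since the corollary is a one-line specialization of an already-proved identity — the real content lies in the sign-reversing involution $\sigma\mapsto\overline{\sigma}$ constructed in the proof of the proposition, which pairs each element with one of opposite length parity while preserving both $\oinv$ and $\ensp$. The only points requiring the hypothesis $n\geq 2$ (rather than the proposition's $n\in\mathbb{P}$) are that the involution needs at least two entries with absolute values $1$ and $2$ to be well defined, and that $L_{oe}$ is the intended odd-length-type statistic only in that range; I would simply note that the corollary's range is covered by the proposition's.
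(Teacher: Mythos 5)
Your proposal is correct and matches the paper's own (implicit) argument exactly: the corollary is obtained by specializing $y=x$ in the bivariate identity $\sum_{\sigma\in D_n}(-1)^{\ell_D(\sigma)}x^{\oinv(\sigma)}y^{\ensp(\sigma)}=0$ of the preceding proposition, whose proof via the sign-reversing involution $\sigma\mapsto\overline{\sigma}$ carries all the content. Your remark on the hypothesis $n\geq 2$ (needed for the involution using entries of absolute values $1$ and $2$) is also apt.
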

We now study the signed bivariate generating function that refines the one of the odd length $L_{\Phi(D_n)}$: $\sum_{w\in D_n}(-1)^{\ell_D(w)}x^{\oinv(w)}y^{\onsp(w)}$. %The goal is to give a direct proof of \cite[Corollary 4.2]{BC2}, showing that, on suitable subsets of $D_n$, odd inversions and odd negative sum pairs behave in the same way. 
We will need some preliminary results. 

The next lemma shows that, as in the case of the symmetric and hyperoctahedral groups, the signed generating function of the odd length over $D_n$ is the same when restricted to chessboard elements. We prove a finer result, namely that this holds also for the signed bivariate generating function of odd inversions and odd negative sum pairs.
\begin{lem}\label{lem:chb}
Let $n \geq 2$. Then 
\[\sum_{\sigma \in D_n}{(-1)^{\ell_D(\sigma)}x^{\oinv(\sigma)}y^{\oneg(\sigma)}}=\sum_{\sigma \in C(D_n)}{(-1)^{\ell_D(\sigma)}x^{\oinv(\sigma)}y^{\oneg(\sigma)}}.\]
\end{lem}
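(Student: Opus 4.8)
The plan is to produce a sign-reversing involution on the non-chessboard elements $D_n \setminus C(D_n)$ that preserves both $\oinv$ and $\onsp$ (and in fact also $\oneg$), so that every non-chessboard element cancels against a partner of opposite sign and only $C(D_n)$ survives on the left-hand side. First I would record a characterization of chessboard elements in terms of positions of absolute values: for $k \in [n]$ let $p_k$ be the position with $|\sigma(p_k)| = k$. Since $\sigma(i) \equiv |\sigma(i)| \pmod 2$, the condition $\sigma(i) \equiv i$ (resp. $\sigma(i) \equiv i+1$) for all $i$ is equivalent to $p_k \equiv k$ (resp. $p_k \equiv k+1$) for all $k$. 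Hence $\sigma$ is chessboard exactly when $p_1, \ldots, p_n$ has strictly alternating parities, and $\sigma \in D_n \setminus C(D_n)$ exactly when there is some $k \in [n-1]$ with $p_k \equiv p_{k+1} \pmod 2$, i.e.\ with the absolute values $k$ and $k+1$ occurring at positions an even distance apart.

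For a non-chessboard $\sigma$ I would let $k = k(\sigma)$ be the least such index and define $\overline{\sigma}$ by altering only the two entries in positions $p_k, p_{k+1}$: if these two entries have the same sign, exchange them; if they have opposite signs, exchange them and negate both. In either case the absolute values $k, k+1$ remain in the even-distance pair $\{p_k, p_{k+1}\}$, merely swapping places, so $\overline{\sigma}$ is again non-chessboard with the same minimal index $k$ (the pairs of smaller consecutive absolute values are untouched, and one checks that the pair $(k-1,k)$ stays at odd distance). Since same-sign entries map to same-sign entries and opposite-sign to opposite-sign, applying the rule twice recovers $\sigma$, so $\sigma \mapsto \overline{\sigma}$ is a fixed-point-free involution of $D_n \setminus C(D_n)$; moreover the sign of every individual entry is preserved in both cases, so the number of negative entries is unchanged and $\overline{\sigma}\in D_n$.

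The heart of the matter is the effect on the statistics. The operation raises or lowers the value in each of the two affected positions by exactly $1$, and the two absolute values involved are consecutive; so no integer value lies strictly between the old and the new entry at either position. A short case check then shows that for every third position the pair contributions to $\inv$ and to $\nsp$ are unchanged. The only pair that can change is the mutual pair $\{p_k, p_{k+1}\}$: in the same-sign case its inversion status flips while its sum keeps its sign, and in the opposite-sign case its inversion status is unchanged while its sum changes sign. Thus in both cases exactly one of $\inv,\nsp$ changes, by $\pm 1$, whence $\ell_D(\overline{\sigma}) = \ell_D(\sigma)\pm 1$. Since $\{p_k,p_{k+1}\}$ sits at even distance it is never counted by $\oinv$ or $\onsp$, so $\oinv(\overline{\sigma}) = \oinv(\sigma)$ and $\onsp(\overline{\sigma}) = \onsp(\sigma)$; and because each entry keeps its sign, $\oneg(\overline{\sigma}) = \oneg(\sigma)$ as well. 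Summing $(-1)^{\ell_D(\sigma)}x^{\oinv(\sigma)}y^{\oneg(\sigma)}$ over the cancelling pairs therefore kills all of $D_n \setminus C(D_n)$, and the same argument with $y^{\onsp}$ in place of $y^{\oneg}$ gives the finer statement announced before the lemma.

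The step I expect to be the main obstacle is the sign bookkeeping in the opposite-sign case, where multiplication also negates the two entries: there one must verify that negation leaves every odd-distance inversion and every odd negative-sum pair with a third position untouched, and that it affects only the single even-distance mutual pair (through its sum, not its inversion). The same-sign case is the simpler value-swap analogue of the mechanism already used in the preceding proposition, so I would present that first and then isolate the extra verification needed for the negating case.
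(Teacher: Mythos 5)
Your proposal is correct and is essentially the paper's own proof: your entrywise operation (swap the two entries of absolute values $k,k+1$, negating both when their signs differ) is exactly left multiplication $\sigma \mapsto s_k\sigma$ for the minimal $k$ with $\sigma^{-1}(k)\equiv\sigma^{-1}(k+1)\pmod 2$, which is the involution the paper uses, and your verification that it preserves $\oinv$, $\onsp$, $\oneg$ while changing $\ell_D=\inv+\nsp$ by $\pm 1$ matches (indeed spells out in more detail) the paper's terse argument. As a bonus, by checking both $\onsp$ and $\oneg$ you cover the statement both as literally written and as intended by the surrounding text.
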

\begin{proof}Let $\sigma \in D_n \setminus C(D_{n})$. Then there exists $i\in[n-1]$ such that $\sigma^{-1}(i)\equiv \sigma^{-1}(i+1) \pmod 2$. 
Let $i$ be minimal with this property and define $\sigma^{*}=s_{i} \sigma$. It is a well defined involution on $D_n \setminus C(D_{n})$. Clearly $\oinv(\sigma^{*})=\oinv(\sigma)$ and $\onsp(\sigma^{*})=\onsp(\sigma)$, while $\ell_D(\sigma^{*})=\ell_D(\sigma) \pm 1$. This implies the thesis.
\end{proof}
Recall that for $\sigma\in D_n$ the descent set is
$\Des(\sigma)=\{i\in [0,n-1]\mid \sigma(i)>\sigma(i+1)\}$, where we set $\sigma(0):=-\sigma(2)$. Also, recall that $S_n$ is naturally isomorphic to the  parabolic subgroup ${D_n}_{[n-1]}$ of $D_n$, and that $D_n$ can be written as 
\[D_n=T_n S_n,\]
where $T_n=\{\tau \in D_n \mid \Des(\tau)\subseteq \{0\}\}$. That is, every even signed permutation $\sigma\in D_n$ can be uniquely written as $\sigma=\sigma^{[n-1]}\sigma_{[n-1]}$, with  $\sigma^{[n-1]}\in T_n$ and $\sigma_{[n-1]}\in S_n$. Moreover,
 \[\ell_D(\sigma)=\ell_D(\sigma^{[n-1]})+\ell_D(\sigma_{[n-1]}),\]
 we refer the reader to \cite[Chapter 8.2]{BB} for further details. This last property does not hold in general for $L_{\Phi(D_n)}$. It does, however, for $L_{\Phi(D_n)}$ on  a special subset of chessboard elements, which we now define.

\begin{defn}
We say that an even signed permutation $\sigma$ is a \emph{good} chessboard element if $\sigma, \sigma^{[n-1]}$ and $\sigma_{[n-1]}$  are chessboard elements. We write $gC(D_n)$ for good chessboard elements of $D_n$.\end{defn}
In the following lemma we show that the odd inversions and the odd negative sum pairs (and thus the odd length $L_{\Phi(D_n)}$) are additive with respect to the parabolic factorisation $D_n=T_n S_n$ on good chessboard elements. 
\begin{lem}\label{lem:add}
 Let $\sigma \in gC(D_n)$. Then 
 $$\oinv(\sigma)=\oinv(\sigma^{[n-1]})+\oinv(\sigma_{[n-1]}) \quad \mbox{and} \quad \onsp(\sigma)=\onsp(\sigma^{[n-1]})+\onsp(\sigma_{[n-1]}), $$
 where $\sigma=\sigma^{[n-1]}\sigma_{[n-1]}$, $\sigma^{[n-1]}\in C(T_n)$ and $\sigma_{[n-1]}\in C(S_n)$.
\end{lem}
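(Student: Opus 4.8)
The plan is to exploit the parabolic factorization $\sigma = \sigma^{[n-1]}\sigma_{[n-1]}$ directly and reduce both identities to two structural features of the factors. Write $u := \sigma^{[n-1]} \in C(T_n)$ and $v := \sigma_{[n-1]} \in C(S_n)$, so that $\sigma = uv$ and hence $\sigma(i) = u(v(i))$ for all $i \in [n]$. Two facts will drive the argument, and I would record them at the outset. First, since $\Des(u) \subseteq \{0\}$, the window of $u$ is increasing, $u(1) < u(2) < \cdots < u(n)$; in particular $u$ restricted to $[n]$ is order preserving and $\inv(u) = \oinv(u) = 0$. Second, $v \in S_n$ has an all-positive window, so $v(i) + v(j) > 0$ for all $i<j$ and therefore $\nsp(v) = \onsp(v) = 0$.

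For the first identity I would argue that $\oinv$ ignores $u$ entirely. Because $v(i), v(j) \in [n]$ and $u$ is order preserving on $[n]$, one has $\sigma(i) > \sigma(j) \iff u(v(i)) > u(v(j)) \iff v(i) > v(j)$. The positions $i,j$, and hence the parity of $j-i$, are untouched, so counting pairs $i<j$ with $j-i$ odd and $\sigma(i) > \sigma(j)$ gives exactly $\oinv(v)$; that is, $\oinv(\sigma) = \oinv(v) = \oinv(\sigma_{[n-1]})$. Combined with $\oinv(u) = 0$ this yields $\oinv(\sigma) = \oinv(\sigma^{[n-1]}) + \oinv(\sigma_{[n-1]})$. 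This half in fact needs no chessboard hypothesis.

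For the second identity I would use the bijection $\phi$ sending an ordered position pair $(i,j)$ with $i<j$ to the ordered value pair $(k,l) := (\min\{v(i),v(j)\}, \max\{v(i),v(j)\})$; since $v$ permutes $[n]$, $\phi$ is a bijection from $\{(i,j) : i<j\}$ onto $\{(k,l): k<l\}$. Under $\phi$ the sign-sum is preserved, $\sigma(i) + \sigma(j) = u(v(i)) + u(v(j)) = u(k) + u(l)$, so the condition $\sigma(i)+\sigma(j) < 0$ matches $u(k) + u(l) < 0$. The crux is matching the distance parities: I need $j-i$ odd $\iff l-k$ odd. This is exactly where the chessboard property of $v = \sigma_{[n-1]}$ enters: writing $v(i) \equiv i + \epsilon \pmod 2$ for a fixed $\epsilon \in \{0,1\}$ gives $v(i) - v(j) \equiv i - j \pmod 2$, and since $l - k = |v(i) - v(j)|$, the parity of $l-k$ equals that of $j-i$. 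Hence $\phi$ restricts to a bijection between the pairs counted by $\onsp(\sigma)$ and those counted by $\onsp(u)$, so $\onsp(\sigma) = \onsp(u) = \onsp(\sigma^{[n-1]})$; together with $\onsp(v) = 0$ this gives the second identity.

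The main obstacle is precisely this parity bookkeeping in the $\onsp$ computation. The natural pair-bijection $\phi$ automatically transports the total sign-sum count (indeed $\nsp(\sigma) = \nsp(u)$ unconditionally), but it does \emph{not} in general respect the odd-distance restriction, which is why additivity of $\onsp$ fails for arbitrary $\sigma$. The good chessboard hypothesis is exactly the condition forcing the position distance $j-i$ and the value distance $l-k$ to share the same parity, which restores the bijection on the restricted sets. I expect the final write-up to be short once the two structural facts about $u$ and $v$ are isolated.
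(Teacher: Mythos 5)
Your proof is correct and takes essentially the same approach as the paper's: the inversions of $\sigma$ and $\sigma_{[n-1]}$ coincide while $\oinv(\sigma^{[n-1]})=0$, and the negative sum pairs of $\sigma$ are in bijection with those of $\sigma^{[n-1]}$ with matching distance parities, while $\onsp(\sigma_{[n-1]})=0$. Your write-up just makes the pair bijection explicit and pins the parity-matching on the chessboard property of $\sigma_{[n-1]}$ (the paper cites that of $\sigma^{[n-1]}$, which under the good-chessboard hypothesis comes to the same thing), a mild sharpening rather than a different argument.
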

\begin{proof}
Let $\sigma$ be a good chessboard element. The set of inversions of $\sigma$ and $\sigma_{[n-1]}$ coincide. Moreover it is clear that $\oinv(\sigma^{[n-1]})=0$, thus
\[\oinv(\sigma)=\oinv(\sigma_{[n-1]})=\oinv(\sigma^{[n-1]})+\oinv(\sigma_{[n-1]}).\]
Since by assumption $\sigma^{[n-1]}$ is a chessboard element, the relative parities of pairs with negative sum are the same as for $\sigma$. Clearly $\onsp(\sigma_{[n-1]})=0$, thus
\[\onsp(\sigma)=\onsp(\sigma^{[n-1]})=\onsp(\sigma^{[n-1]})+\onsp(\sigma_{[n-1]}).\qedhere\]
\end{proof}
We show now that the signed bivariate generating function equals the one over good chessboard elements. The result follows from its analogue for type $B$.
\begin{lem}\label{lem:good}
	Let $n\geq 2$. Then
\[\sum_{\sigma\in D_n}{(-1)^{\ell_D (\sigma)}x^{\oinv (\sigma) }y^{\onsp (\sigma)}}=\sum_{\sigma\in gC(D_n)}{(-1)^{\ell_D (\sigma)}x^{\oinv (\sigma) }y^{\onsp (\sigma)}}.\]
\end{lem}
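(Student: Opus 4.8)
The plan is to reduce the statement over $D_n$ to the already-proved multivariate identity of Theorem~\ref{triv2} over $B_n$. The key observation is that $\ensp$ and $\onsp$ together account for all negative sum pairs, and that on chessboard elements the parities line up so that the bivariate $D_n$ sum can be read off from a suitable specialization of the $B_n$ generating function. First I would invoke Lemma~\ref{lem:chb} to restrict the sum on the left-hand side to chessboard elements $C(D_n)$, so that I only need
\[
\sum_{\sigma\in C(D_n)}(-1)^{\ell_D(\sigma)}x^{\oinv(\sigma)}y^{\onsp(\sigma)}
=\sum_{\sigma\in gC(D_n)}(-1)^{\ell_D(\sigma)}x^{\oinv(\sigma)}y^{\onsp(\sigma)}.
\]

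Next I would use the parabolic factorisation $D_n=T_nS_n$, writing each $\sigma\in C(D_n)$ uniquely as $\sigma=\sigma^{[n-1]}\sigma_{[n-1]}$ with $\sigma^{[n-1]}\in T_n$ and $\sigma_{[n-1]}\in S_n$, under which $\ell_D$ is additive. The crucial point is to identify, among chessboard elements, exactly which ones are \emph{good}, i.e.\ have both parabolic factors chessboard. I expect that a chessboard $\sigma$ fails to be good precisely when one of its factors has a pair of entries of equal parity in adjacent positions, which is the situation Lemma~\ref{lem:chb} already exploits. On good chessboard elements Lemma~\ref{lem:add} gives additivity of both $\oinv$ and $\onsp$ across the two factors, so the statistics match term by term; the task is therefore to produce a sign-reversing involution on $C(D_n)\setminus gC(D_n)$ that preserves $\oinv$ and $\onsp$ while flipping the parity of $\ell_D$.

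The construction of this involution is where I would spend the most care, and it is the main obstacle. The natural candidate is to locate, in either the $T_n$-factor or the $S_n$-factor, the first adjacency witnessing non-goodness and to left- or right-multiply by the corresponding simple reflection ($s_i$ for an interior adjacency, $s_0^D$ or $s_1$ near the front), exactly as in the involutions used in Lemmas~\ref{lem:unim} and~\ref{lem:chb} and in the preceding Proposition over $D_n$. I would verify that such a move lands back in $C(D_n)\setminus gC(D_n)$, is an involution, changes $\ell_D$ by $\pm1$, and leaves both $\oinv$ and $\onsp$ unchanged; the last claim follows because the transposed values differ in parity only in a controlled way and the affected inversion or negative-sum pair sits at even distance, mirroring the parity bookkeeping in Lemma~\ref{zerotri}. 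The delicate part is checking that the two regimes (interior adjacency versus the leading positions governed by $s_0^D$ versus $s_1$) can be stitched into a single well-defined involution with no fixed points outside $gC(D_n)$, and that the choice of ``first'' adjacency makes the map genuinely involutive rather than merely parity-reversing.

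Once the involution cancels the contributions of $C(D_n)\setminus gC(D_n)$ in pairs, the two sums agree and the identity follows. If isolating a clean single involution proves awkward, the fallback is to factor the generating function over $C(D_n)$ as a product of a $T_n$-part and an $S_n$-part using additivity on $gC(D_n)$, handle the $S_n$-part via Lemma~\ref{lem:unim} and its chessboard refinement, and match the $T_n$-part against the relevant specialization of Theorem~\ref{triv2}; this routes around the involution but requires more computation. I would attempt the direct involution first, since the remark following Lemma~\ref{lem:unim} strongly suggests the parity-preserving structure needed here is already available.
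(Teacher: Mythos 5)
Your two-step architecture --- restrict to $C(D_n)$ via Lemma~\ref{lem:chb}, then cancel the contributions of $C(D_n)\setminus gC(D_n)$ by a sign-reversing involution preserving $\oinv$ and $\onsp$ --- is exactly the right one; it is what the paper does, except that the paper simply imports both involutions from \cite[Lemmas~16 and~19]{VS} and observes that they restrict from $B_n$ to $D_n$ because they involve no sign changes. The problem is that your proposal stops precisely where the mathematical content of the lemma lies: the involution on $C(D_n)\setminus gC(D_n)$ is never constructed, and you explicitly defer every verification (well-definedness, involutivity, preservation of the statistics). Moreover, the candidate you sketch --- multiplication by a simple reflection ($s_i$ in the interior, $s_0^D$ or $s_1$ near the front, ``exactly as in'' Lemmas~\ref{lem:unim}, \ref{lem:chb} and the vanishing proposition) --- cannot work here. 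On a chessboard element the entries of absolute values $i$ and $i+1$ necessarily occupy positions of \emph{opposite} parity, so the pair disturbed by any such simple-reflection move sits at odd distance, and hence the move changes $\oinv$ or $\onsp$ by exactly $1$. The involution of Lemma~\ref{lem:chb} preserves the statistics precisely because its hypothesis places the two swapped values at even distance, a configuration that never occurs inside $C(D_n)$; so on the set where you need to cancel, all the moves you list are unavailable, and your worry about ``stitching two regimes'' is a symptom of being on the wrong track rather than a technicality.

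The involution that actually does the job (and is, in substance, the one in \cite[Lemma~19]{VS}) has a different shape. Write $v_1<\dots<v_n$ for the window entries of $\sigma\in C(D_n)$; one checks that $\sigma$ fails to be good exactly when $v_k\equiv v_{k+1}\pmod 2$ for some $k$. Taking $k$ minimal and exchanging the two entries $v_k$ and $v_{k+1}$ in the window of $\sigma$ (equivalently: keep $\sigma^{[n-1]}$ fixed and replace $\sigma_{[n-1]}$ by $s_k\sigma_{[n-1]}$ --- note $s_k$ here is in general \emph{not} a simple reflection when viewed as acting on $\sigma$ itself) gives the required map. Since $\sigma$ is chessboard, $v_k$ and $v_{k+1}$ lie at positions of equal parity, so $\oinv$ and $\onsp$ are unchanged, while $\inv$ changes by $1$ and $\nsp$ not at all, so $\ell_D$ flips parity; no sign changes occur, so one stays in $D_n$, in $C(D_n)$, and outside $gC(D_n)$. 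Finally, both your framing and your fallback via Theorem~\ref{triv2} are not viable: that theorem evaluates a different generating function, over $B_n$, and cannot yield a support-restriction identity over $D_n$; and ``factoring over $C(D_n)$ using additivity'' is circular, since Lemma~\ref{lem:add} holds only on good chessboard elements, i.e.\ only after the very reduction this lemma is supposed to provide.
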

\begin{proof}
The lemma follows by \cite[Lemma~16 and Lemma~19]{VS}, observing that the involution defined in \cite[Lemma~19]{VS} restricts to an involution on the relevant subset of $D_n$, since it does not involve sign changes.
\end{proof}
The next theorem  implies (and gives a direct proof of) \cite[Corollary~4.2]{BC2}.
\begin{thm}
	Let $n\geq 2$. Then
\[\sum_{\sigma\in D_n}{(-1)^{\ell_D (\sigma)}x^{\oinv (\sigma) }y^{\onsp (\sigma)}}= \left(\sum_{\sigma\in S_n }{(-1)^{\ell (\sigma)}x^{\oinv (\sigma)}} \right)\left(\sum_{\sigma\in S_n }{(-1)^{\ell (\sigma)}y^{\oinv (\sigma)}}\right).\]
\end{thm}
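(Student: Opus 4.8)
The plan is to reduce the bivariate generating function over $D_n$ to a product of two type-$A$ generating functions by means of the parabolic factorization $D_n = T_n S_n$, using the preceding lemmas to pass to good chessboard elements where the statistics are additive. First I would apply Lemma~\ref{lem:good} to replace the sum over all of $D_n$ by the sum over $gC(D_n)$, so that
\[
\sum_{\sigma\in D_n}(-1)^{\ell_D(\sigma)}x^{\oinv(\sigma)}y^{\onsp(\sigma)}
= \sum_{\sigma\in gC(D_n)}(-1)^{\ell_D(\sigma)}x^{\oinv(\sigma)}y^{\onsp(\sigma)}.
\]
On $gC(D_n)$ every element factors uniquely as $\sigma=\sigma^{[n-1]}\sigma_{[n-1]}$ with $\sigma^{[n-1]}\in C(T_n)$ and $\sigma_{[n-1]}\in C(S_n)$, and by the additivity of $\ell_D$ under this factorization together with Lemma~\ref{lem:add} (which gives the same additivity for $\oinv$ and $\onsp$ on good chessboard elements), the summand splits as a product. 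Hence the sum factors as
\[
\left(\sum_{\tau\in C(T_n)}(-1)^{\ell_D(\tau)}y^{\onsp(\tau)}\right)
\left(\sum_{\rho\in C(S_n)}(-1)^{\ell(\rho)}x^{\oinv(\rho)}\right),
\]
where in the second factor $\oinv(\sigma_{[n-1]})$ is computed exactly as in $S_n$ and $\onsp(\sigma_{[n-1]})=0$, while in the first factor $\onsp(\sigma^{[n-1]})$ survives and $\oinv(\sigma^{[n-1]})=0$.

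The second factor is already of the desired form: identifying $C(S_n)$ with the chessboard elements of the symmetric group and invoking the chessboard version of Lemma~\ref{lem:unim} recorded in the Remark (so that restricting to $C(S_n)$ does not change the signed generating function of $\oinv$ over $S_n$), it equals $\sum_{\sigma\in S_n}(-1)^{\ell(\sigma)}x^{\oinv(\sigma)}$. The remaining task is to show that the first factor, the signed generating function of $y^{\onsp}$ over $C(T_n)$, equals the \emph{other} type-$A$ factor $\sum_{\sigma\in S_n}(-1)^{\ell(\sigma)}y^{\oinv(\sigma)}$. The natural approach is to produce a bijection $C(T_n)\to S_n$ (or $C(S_n)$) that sends the statistic $\onsp$ to $\oinv$ and preserves the parity of $\ell$; concretely, an element of $T_n$ is determined by which entries carry a negative sign (its descent set lies in $\{0\}$, so its window is, up to the sign pattern and the even-signed constraint, increasing), and the pattern of negative-sum pairs of $\sigma^{[n-1]}$ should match the inversion pattern of an associated permutation.

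The main obstacle I expect is this last identification of the first factor with $\sum_{\sigma\in S_n}(-1)^{\ell(\sigma)}y^{\oinv(\sigma)}$: unlike the $\oinv$ factor, which comes for free from $C(S_n)\cong C(U_n)\subseteq S_n$, the $\onsp$-over-$C(T_n)$ factor requires understanding precisely how negative-sum pairs of a chessboard element of $T_n$ behave under the parity condition $j-i\equiv 1\pmod 2$, and matching them bijectively (with the correct sign) to odd inversions of a genuine permutation. One clean route is to transpose the whole computation: apply the already-established type-$B$ result, Theorem~\ref{triv1}, whose right-hand side visibly factors into a $y$-part and a $z$-part, and then recognize each factor as a univariate type-$A$ signed generating function via Lemma~\ref{lem:unim} and its chessboard refinement; Lemma~\ref{lem:good} is exactly the bridge that makes the $D_n$ sum inherit this factorization from $B_n$. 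I would pursue this Theorem~\ref{triv1} route first, as it sidesteps constructing an explicit $\onsp$-to-$\oinv$ bijection and instead reads the factorization directly off the type-$B$ formula.
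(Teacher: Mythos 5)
Your first half coincides with the paper's proof: Lemma~\ref{lem:good} reduces the sum to $gC(D_n)$, and Lemma~\ref{lem:add} together with the additivity $\ell_D(\sigma)=\ell_D(\sigma^{[n-1]})+\ell_D(\sigma_{[n-1]})$ splits it into
\[
\left(\sum_{\tau\in C(T_n)}(-1)^{\ell_D(\tau)}y^{\onsp(\tau)}\right)\left(\sum_{\rho\in C(S_n)}(-1)^{\ell(\rho)}x^{\oinv(\rho)}\right).
\]
The gap is exactly where you flag it, and the escape route you say you would pursue does not close it. Theorem~\ref{triv1} is an identity over $B_n$ with sign $(-1)^{\ell_B}$, where $\ell_B=\inv+\nneg+\nsp$; to extract from it a sum over the subgroup $D_n$ you would have to insert the factor $\frac{1}{2}\left(1+(-1)^{\nneg(\sigma)}\right)$, i.e.\ you need the joint signed distribution of $(-1)^{\oneg(\sigma)+\eneg(\sigma)}$ with $\oinv$ and $\onsp$. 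Neither \eqref{triooo} nor \eqref{trieoo} tracks $\oneg$ and $\eneg$ simultaneously, and the paper notes right after Theorem~\ref{triv1} that the four-variable distribution of $x_1^{\oneg}x_2^{\eneg}y^{\oinv}z^{\onsp}$ does \emph{not} factor nicely in general, so there is no formula to ``read off''. Two further points: the right-hand side of \eqref{triooo} does not split into a $y$-part and a $z$-part (for $n$ even it contains the mixed factor $1-y^{n/2}z^{n/2}$, and $z$ is tied to the $\oneg$-variable $x$ in the factors $1-xz^{2i}$); and Lemma~\ref{lem:good} is a statement internal to $D_n$, reducing $D_n$ to $gC(D_n)$ --- it is not a bridge between $B_n$ and $D_n$ generating functions.

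What is missing is the identification of the $C(T_n)$ factor, and it is exactly the step Lemma~\ref{lem:unim} was proved for. Elements of $T_n$ have increasing windows, so their negative entries form a prefix; consequently the absolute-value map $|\cdot|$ sends $T_n$ bijectively onto the unimodal permutations $U_n$, restricts to a bijection $C(T_n)\to C(U_n)$, and satisfies, for $\sigma\in T_n$,
\[
\nsp(\sigma)=\inv(|\sigma|),\qquad \onsp(\sigma)=\oinv(|\sigma|),\qquad\text{hence }\ \ell_D(\sigma)=\inv(\sigma)+\nsp(\sigma)=\ell(|\sigma|).
\]
Thus the first factor equals $\sum_{\sigma\in C(U_n)}(-1)^{\ell(\sigma)}y^{\oinv(\sigma)}$, and the chessboard refinement of Lemma~\ref{lem:unim} (the Remark following it), together with the type-$A$ analogue of the involution in Lemma~\ref{lem:chb} to pass from $C(S_n)$ back to $S_n$, turns both factors into the full type-$A$ sums $\sum_{\sigma\in S_n}(-1)^{\ell(\sigma)}y^{\oinv(\sigma)}$ and $\sum_{\sigma\in S_n}(-1)^{\ell(\sigma)}x^{\oinv(\sigma)}$. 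Your sketched ``bijection matching negative-sum pairs to inversions'' is precisely this map; it needed to be written down, since it --- not the type-$B$ theorem --- is the actual content of the proof.
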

\begin{proof}Thanks to Lemma~\ref{lem:good} and \ref{lem:add} the sum on the left hand side can be rewritten as
\begin{align}\nonumber
&\sum_{\sigma\in D_n}{(-1)^{\ell_D (\sigma)}x^{\oinv (\sigma) }y^{\onsp (\sigma)}}=\sum_{\sigma\in gC(D_n)}{(-1)^{\ell_D (\sigma)}x^{\oinv (\sigma) }y^{\onsp (\sigma)}}=\\ \label{eq:fact} &\left(\sum_{\sigma\in C(T_n)}{(-1)^{\ell_D (\sigma)}y^{\onsp (\sigma)}}\right)\left(\sum_{\sigma\in C(S_n)}{(-1)^{\ell (\sigma)}x^{\oinv (\sigma) }}\right).
\end{align}
We claim that the first factor of \eqref{eq:fact} equals the signed distribution of the odd length on the symmetric group. Consider the map $$|\cdot|:D_n \rightarrow S_n,\qquad\sigma = [\sigma(1),\ldots,\sigma(n)] \mapsto |\sigma|=[|\sigma(1)|,\ldots,|\sigma(n)|].$$ Its restriction to $C(T_n)$ is a bijection onto $C(U_n)$, the set of chessboard unimodal permutations. It is easy to see that (odd) negative sum pairs of elements of $C(T_n)$ are (odd) inversions of their images in  $C(U_n)$ through $|\cdot|$. More precisely, for $\sigma \in C(T_n)$
$$\nsp(\sigma)=\inv(|\sigma|), \qquad\onsp(\sigma)=\oinv(|\sigma|).$$
This observation proves that indeed
\[\sum_{\sigma\in C(T_n)}{(-1)^{\ell_D (\sigma)}y^{\onsp (\sigma)}}=\sum_{\sigma\in C(U_n)}{(-1)^{\ell (\sigma)}y^{\oinv (\sigma)}},\]
which together with \eqref{eq:fact} and Lemma~\ref{lem:unim} yields the result. \end{proof}
Setting $y=x$ gives the known result for the signed distribution of the odd length over $D_n$ (cf.\ \cite[Theorem 4.1 and Corollary 4.2]{BC2}).
\begin{cor}
Let $n\geq 2$. Then
\[\sum_{\sigma\in W(\Phi(D_n))}{(-1)^{\ell (\sigma)}x^{L_{\Phi(D_n)} (\sigma)}}= \left(\sum_{\sigma\in W(\Phi(A_{n-1})) }{(-1)^{\ell (\sigma)}x^{L_{\Phi(A_{n-1})} (\sigma)}} \right)^2.\]
\end{cor}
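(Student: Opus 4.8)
The plan is to obtain the corollary as the specialization $y = x$ of the theorem immediately preceding it, combined with the combinatorial descriptions of the odd length recorded in Proposition \ref{oddcomb}. First I would set $y = x$ in the displayed identity of that theorem. On the left-hand side the two exponents combine into $x^{\oinv(\sigma) + \onsp(\sigma)}$, and by the type $D$ case of Proposition \ref{oddcomb} we have $L_{\Phi(D_n)}(\sigma) = \oinv(\sigma) + \onsp(\sigma)$ for every $\sigma$ in the Weyl group of $D_n$. Since that Weyl group is $D_n$ and its Coxeter length is $\ell_D$, the left-hand side becomes exactly $\sum_{\sigma \in W(\Phi(D_n))} (-1)^{\ell(\sigma)} x^{L_{\Phi(D_n)}(\sigma)}$, which is the left-hand side of the corollary.

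Next I would treat the right-hand side. After setting $y = x$ the two factors coincide, so the product collapses to the square $\left( \sum_{\sigma \in S_n} (-1)^{\ell(\sigma)} x^{\oinv(\sigma)} \right)^2$. Invoking the type $A$ case of Proposition \ref{oddcomb}, namely $L_{\Phi(A_{n-1})}(\sigma) = \oinv(\sigma)$ for $\sigma \in S_n = W(\Phi(A_{n-1}))$, rewrites this square as $\left( \sum_{\sigma \in W(\Phi(A_{n-1}))} (-1)^{\ell(\sigma)} x^{L_{\Phi(A_{n-1})}(\sigma)} \right)^2$, which is precisely the claimed right-hand side.

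There is no genuine obstacle here: the corollary is a direct substitution into an already-established identity. The only point requiring care is the bookkeeping that identifies $\oinv + \onsp$ with $L_{\Phi(D_n)}$ and $\oinv$ with $L_{\Phi(A_{n-1})}$ via Proposition \ref{oddcomb}, together with matching the index sets $D_n$ and $S_n$ to the Weyl groups $W(\Phi(D_n))$ and $W(\Phi(A_{n-1}))$ and their respective Coxeter lengths $\ell_D$ and $\ell$, as provided by Proposition \ref{prelim}.
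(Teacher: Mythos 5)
Your proposal is correct and is exactly the paper's argument: the corollary is obtained by setting $y=x$ in the immediately preceding theorem and invoking the type $A$ and type $D$ cases of Proposition \ref{oddcomb} to identify $\oinv$ with $L_{\Phi(A_{n-1})}$ and $\oinv+\onsp$ with $L_{\Phi(D_n)}$. The paper compresses all of this into the single remark ``Setting $y=x$ gives the known result,'' so your more explicit bookkeeping is just a fleshed-out version of the same specialization.
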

\section{Signed generating functions for Weyl groups}
We summarize in this section the results obtained for the signed generating functions of the odd length on the classical Weyl groups, and we record some computations that we made for the exceptional types. The signed generating functions for the exceptional types were computed with the Python package \verb|PyCox| (see \cite{Ge}).
\begin{thm}
Let $\Phi$ be a crystallographic root system. Then
\begin{equation*}\sum_{\sigma \in W(\Phi)}{(-1)^{\ell(\sigma)}x^{L_{\Phi}(\sigma)}}=\begin{cases}
\prod\limits_{i=2}^{n} \left(1+(-1)^{i-1}x^{\left\lfloor\frac{i}{2}\right\rfloor}\right), &\quad\mbox{if $\Phi$ is of type $A_{n-1}$,}\\ 
\prod\limits_{i=1}^{n} (1-x^{i}), &\quad\mbox{if $\Phi$ is of type $B_n$,}\\ 
 (1-x^{\left\lceil\frac{n}{2}\right\rceil})\prod\limits_{i=1}^{\left\lceil\frac{n}{2}\right\rceil} (1-x^{2i})^2, &\quad\mbox{if $\Phi$ is of type $C_n$,}\\
 \prod\limits_{i=2}^{n}(1+(-1)^{i-1}x^{\left\lfloor\frac{i}{2}\right\rfloor})^2,  &\quad\mbox{if $\Phi$ is of type $D_n$}.
 \end{cases}\end{equation*}
Moreover,\begin{align*}
\sum_{\sigma\in W(\Phi)}(-1)^{\ell(\sigma)}x^{L_{\Phi}(\sigma)}&=(1-x^2)^2(1-x^4)^2, &\mbox{if $\Phi$ is of type $F_4$},\\
\sum_{\sigma\in W(\Phi)}(-1)^{\ell(\sigma)}x^{L_{\Phi}(\sigma)}&=(1-x^2)(1-x^4)(1-x^6)(1-x^8),&\mbox{if $\Phi$ is of type $E_6$}, \\
\sum_{\sigma\in W(\Phi)}(-1)^{\ell(\sigma)}x^{L_{\Phi}(\sigma)}&=\prod_{i=2}^{8}(1-x^i),&\mbox{if $\Phi$ is of type $E_7$}.
\end{align*}
\end{thm}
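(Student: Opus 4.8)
The plan is to read this off as a compilation theorem. The invariance proposition preceding Proposition~\ref{oddcomb} shows that the signed generating function of $L_{\Phi(\Delta)}$ does not depend on the chosen simple system, so for every $\Phi$ we are free to work with one convenient $\Delta$; for the classical types this is exactly the simple system of Proposition~\ref{prelim}, for which Proposition~\ref{oddcomb} rewrites $L_\Phi$ in terms of $\oinv$, $\oneg$, $\onsp$, and $\ensp$. With this translation in hand, each classical case becomes a specialization of an identity already proved, while the exceptional cases reduce to finite computations.

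First I would dispatch the four classical families. For type $A_{n-1}$ Proposition~\ref{oddcomb} gives $L_\Phi=\oinv$, and the identity
\[
\sum_{\sigma\in S_n}(-1)^{\ell(\sigma)}x^{\oinv(\sigma)}=\prod_{i=2}^{n}\bigl(1+(-1)^{i-1}x^{\lfloor i/2\rfloor}\bigr)
\]
is the type-$A$ formula of \cite{BC1}; equivalently, by Lemma~\ref{lem:unim} the sum may first be restricted to unimodal permutations, where a short induction on $n$ (placing the largest value, which in a unimodal permutation must occupy one of the two ends) produces the product. For type $B_n$ we have $L_\Phi=\oneg+\oinv+\onsp$, and the stated $\prod_{i=1}^{n}(1-x^{i})$ is the corollary obtained from Theorem~\ref{triv1} by setting $x=y=z$. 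For type $C_n$ we have $L_\Phi=\nneg+\oinv+\ensp=\oneg+\eneg+\oinv+\ensp$, so the product $(1-x^{\lceil n/2\rceil})\prod_{i=1}^{\lceil n/2\rceil}(1-x^{2i})^2$ is the specialization $x_1=x_2=y=z=x$ of Theorem~\ref{triv2}. For type $D_n$ we have $L_\Phi=\oinv+\onsp$, and the type-$D$ factorization of \S\ref{sec:multiD} expresses this signed generating function as the square of the type-$A_{n-1}$ one; inserting the type-$A$ product and squaring gives $\prod_{i=2}^{n}(1+(-1)^{i-1}x^{\lfloor i/2\rfloor})^2$.

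For the exceptional types there is no such combinatorial reduction, so I would compute $L_\Phi$ straight from the definition: fix a simple system, tabulate the positive roots together with their heights, mark those of odd height, and for each $\sigma\in W(\Phi)$ count the odd positive roots sent into $\Phi^{-}_{\Delta}$, accumulating $(-1)^{\ell(\sigma)}x^{L_\Phi(\sigma)}$. These are finite sums over $|W(F_4)|=1152$, $|W(E_6)|=51840$, and $|W(E_7)|=2903040$ elements, which I would evaluate with the \verb|PyCox| package \cite{Ge} and then factor to match the stated products. The substance of the argument, and its only genuine obstacle, lies entirely here: the enumeration is immediate for $F_4$ and $E_6$, already sizeable for $E_7$, and out of reach for $E_8$ (whose signed generating function, moreover, is not expected to factor nicely), which is precisely why $E_8$ is absent from the statement; the rank-two type $G_2$ is a trivial check and is likewise omitted. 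The classical part, by contrast, is pure bookkeeping over the identities established in \S4 and \S5.
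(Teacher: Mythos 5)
Your proposal follows essentially the same route as the paper: the classical cases are read off from Proposition~\ref{oddcomb} together with the specializations of Theorem~\ref{triv1}, Theorem~\ref{triv2}, and the type-$D$ factorization of \S\ref{sec:multiD} (with the type-$A$ product imported from \cite{BC1}), while the exceptional cases $F_4$, $E_6$, $E_7$ are finite computations carried out with \texttt{PyCox}, exactly as in the paper. One small correction to your commentary: the paper omits $E_8$ purely because the computation exceeded the authors' computing resources --- they explicitly state their belief that the $E_8$ generating function \emph{does} factor nicely, contrary to your parenthetical remark.
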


It is conceivable, and we believe, that the generating function $\sum_{\sigma\in W(\Phi)}(-1)^{\ell(\sigma)}x^{L_{\Phi}(\sigma)}$
also factors nicely in type $E_8$. However, we have been unable to carry out this computation with the computing
resources at our disposal.

\begin{rmk} We record here the functions used to compute the generating functions with \verb|PyCox|.

\verb|def f(n):|

\verb|  W  = coxeter("W",n)|
  
\verb|  y  = var('y')|
  
\verb|  A  = allcoxelms(W)|
  
\verb|  Or = [i for i in range(W.N) if mod(sum((W.roots[i])),2)==1]|
  
\verb|  B  = [W.coxelmtoperm(A[i][j]) for i in range(len(A)) for j in range(len(A[i]))]|
  
\verb|  return sum(y^(W.permlength(v)) * x^(oddr(v,W.N,Or)) for v in B)|

\verb|def oddr(v,n,Or): |

\verb|  return sum(1 for j in Or if v[j]>n-1)|
\end{rmk}

\end{document}